
\documentclass[12pt]{article}
\usepackage{amsmath,amsfonts,amssymb,amsthm}
\jot3mm

\oddsidemargin 0cm \evensidemargin 0cm
\topmargin0cm
\headheight0cm
\headsep0cm
\textheight23.5cm
\topskip2ex
\textwidth16cm

\usepackage{xspace}
\usepackage{xr}
\usepackage{float}
\usepackage{makeidx}
\usepackage{graphicx}
\usepackage{epsfig} 
\usepackage{url}

\newcommand{\reseteqn}{\setcounter{equation}{0}}

\addtolength{\jot}{3pt}

\newcommand{\defas}{\mathrel{\raise.095ex\hbox{:}\mkern-4.2mu=}}

\newcommand{\bdm}{\begin{displaymath}}
\newcommand{\edm}{\end{displaymath}}

\newcommand{\bP}{\ensuremath{\mathbf{P}}\xspace}
\newcommand{\bE}{\ensuremath{\mathbf{E}}\xspace}

\newcommand{\bd}{\ensuremath{\mathbf{d}}\xspace}

\newcommand{\bR}{\ensuremath{\mathbb{R}}\xspace}

\newcommand{\bZ}{\ensuremath{\mathbb{Z}}\xspace}

\newcommand{\cE}{\ensuremath{{\cal E}}\xspace}

\newcommand{\cF}{\ensuremath{{\cal F}}\xspace}

\renewcommand{\leq}{\ensuremath{\leqslant}}
\renewcommand{\geq}{\ensuremath{\geqslant}}

\newcommand{\gq}{\ensuremath{\theta}\xspace}
\newcommand{\gt}{\ensuremath{\tau}\xspace}

\newcommand{\gl}{\ensuremath{\lambda}\xspace}

\newcommand{\gf}{\ensuremath{\varphi}\xspace}
\newcommand{\gm}{\ensuremath{\mu}\xspace}

\newcommand{\gp}{\ensuremath{\pi}\xspace}

\newcommand{\gs}{\ensuremath{\sigma}\xspace}

\newcommand{\gW}{\ensuremath{\Omega}\xspace}

\newcommand{\gj}{\ensuremath{\varphi}\xspace}

\newcommand{\mrm}{\mathrm}

\hyphenation{in-homo-gen-eous time--in-homo-gen-eous time--homo-gen-eous%
 homo-gen-eous}

\newlength{\Litil} 
\newlength{\Mid} 
\newlength{\Stor} 
\setlength{\Litil}{3.7cm}
\setlength{\Mid}{4.6cm}
\setlength{\Stor}{5.75cm}

\newtheorem{theorem}{Theorem}

\theoremstyle{remark}

\theoremstyle{remark}

\newcommand{\bN}{\ensuremath{\mathbb{N}}\xspace}
\newcommand{\given}{\ensuremath{\;|\;}}
\newcommand{\asdef}{\mathrel{=\mkern-4.2mu\raise.095ex\hbox{:}}}

\newcommand{\bil}{\ensuremath{\quad}}
\newcommand{\pbil}{\ensuremath{\;\;}}

\newlength{\myownsep}
\setlength{\myownsep}{0.63\topsep} 
\makeatother


\usepackage[all]{xy}

\begin{document}
\reseteqn

\newcommand{\ci}[1]{\ensuremath{{#1}^{\circ}}}
\newcommand{\cik}[1]{\ensuremath{{#1}^{[\circ]}}}

\title{Construction and Characterisation 
of Stationary and Mass-Stationary Random Measures on $\bR^d$}
\author{G\"unter Last\footnotemark[1]\, and Hermann Thorisson\footnotemark[2]}

\footnotetext[1]{Institute of Stochastics, Karlsruhe Institute of Technology,
Germany, guenter.last@kit.edu}

\footnotetext[2]{Department of Mathematics, University of Iceland, Iceland, hermann@hi.is}

\date{\today}

\maketitle

\vspace{-8 mm}
\begin{abstract}
\noindent
Mass-stationarity means that the origin 
is at a typical location in the mass of a random measure.
It 
 is an intrinsic characterisation of  
Palm versions with respect to stationary random measures.
Stationarity is the special case when the random measure is
Lebesgue measure.
The paper presents 
constructions of stationary and mass-stationary
versions through change of measure and 
change of origin.
Further, the paper considers 
characterisations of mass-stationarity
by distributional invariance under preserving
shifts against stationary independent backgrounds.
\end{abstract}

\noindent
{\em MSC} 2000 {\em subject classifications.} Primary 60G57, 60G55;  
Secondary 60G60.\newline
{\em Key words and phrases.} stationary random measure, point process,
mass-stationarity, Palm measure, invariant transport, allocation,
preserving shift.

\section{Introduction}

\noindent
Mass-stationarity is a formalization of the intuitive idea
that the origin 
is at a typical location in the {\em mass} of a random measure;
the definition is given at (1) below.
Stationarity is the special case when the random measure is
Lebesgue measure; stationarity can be thought of as saying that the origin
is at a typical location in the {\em space}.
Mass-stationarity was introduced in \cite{LaTho09},
 where it is shown that it is a 
 characterisation of
Palm versions with respect to stationary random measures;
see Theorem~1 below.
Palm probabilities are a very important concept in 
theory and application of point processes and random measures
\cite{Thor00,Kallenberg,La10}. In stochastic geometry, for instance,
already the definition of the basic notions 
(e.g.\ typical cell, typical face)
require the use of Palm probability measures; see \cite{SW08}.
The focus of the present paper is on the intrinsic properties of these
measures.

For a simple example, consider a stationary Poisson process $N$ on the line. 
Stationarity means that shifting the origin to any location $t \in \bR$ does not 
alter the distribution of $N$; so the origin is at a typical location on the line~(in~space).
If we add an extra point at the origin 
then we obtain the mass-stationary Palm version $\ci{N} = N + \delta_0$;
the new point is 
at a typical location in the mass of $\ci{N}$
because shifting the origin to the $n^{\text{th}}$ point on the~right~(or~on~the~left) 
does not alter the fact that the inter-point distances of $\ci{N}$ are i.i.d.\! exponential. 

It is only in the Poisson case that the mass-stationary/stationary version is obtained
from the stationary/mass-stationary one by simply adding/deleting a point at the origin.
And it is only on the line that mass-stationarity of simple point processes
can be characterized by
distributional invariance under shifts of the origin to the  
$n^{\text{th}}$ point on the right (or on the left).

The aim of this paper is twofold.
We shall first 
consider constructions of stationary
and mass-stationary
versions for random measures on $\bR^d$,
and then study 
characterisations of mass-stationarity.
Actually, as required by many applications, we shall treat the random measure 
jointly with a random element, for instance a random field. 
We denote by $(X, \xi)$ a random element-and-measure which is stationary 
under a (probability) measure $\bP$, and by $(\ci{X}, \ci{\xi})$ a random element-and-measure
which is mass-stationary under a (probability) measure~$\ci{\bP}$.
We will not restrict $\bP$ and $\ci{\bP}$ to be  probability measures.
In Palm theory, this generality can in fact be quite useful
for probabilistic purposes.
For instance, two-sided Brownian motion 
is mass-stationary with respect to its local time at zero, 
but the stationary version does not have a finite distribution;
see  \cite{LaMoeTho12}.

In Section 2, \,we 
recall the definition of 
mass-stationarity, \,the definition of Palm versions,\,
and the key characterisation theorem linking
these concepts.

The construction part of the paper consists of Sections 3--5. 
In Section~3, we elaborate on the two-step change-of-measure
change-of-origin method, applied to simple point processes in 
\cite{Thor00},
to construct the mass-stationary Palm version when the stationary version
is given. 
In Section~4, we reverse this construction to obtain the
stationary version when the mass-stationary version is given.
In Section~5, we show that when the random measure 
has a density  field with respect to Lebesgue measure
then a change of origin is not needed
to construct the mass-stationary version. 
We also show that if the density  field is strictly 
positive then a change of origin 
is not needed to construct the stationary version.

The characterisation part of the paper consists of Sections 6--8. 
In Section~6, we show for random measures with
a strictly positive density field, that
mass-stationarity is characterized by
distributional invariance under preserving shifts, i.e.\!
shifts inducing allocations preserving $\xi$.
This has been known to be the case for  simple point processes on 
Abelian groups; see \cite{He:La:05,He:La:07,La10}.
This is also known to be the case for diffuse random measures on the line; 
see  Theorem 3.1 in \cite{LaMoeTho12}.
In~Section~7, we show
that the same is true for diffuse random 
measures on $\bR^{d}$ if the 
background randomization
from
\cite{Thor00} is applied.
In Section~8, we lift this shift characterization
 further to general random measures on $\bR^d$
by extending them to diffuse random measures
on $\bR^{d+1}$.

Section 9 
concludes with 
a final remark 
on 
mass-stationarity.

We end
this introduction with some
further 
background information relevant for
the topic of this paper.
Preserving allocations are a special case of
mass transports 
balancing two 
random measures;
see  \cite{HP05,LaTho09}.
Stable transports between Lebesgue 
measure
and a stationary 
point process were introduced and
studied in \cite{HP05,HHP06}. 
The algorithm of \cite{HHP06} is generalized in 
\cite{Haji-MirsadeghiKhezeli15} to balance
general stationary ergodic random measures of equal intensity.
Gravitational allocations
balancing Lebesgue measure and a stationary Poisson process were  
investigated in \cite{Ch:Pe:Per:Ro:10,Ch:Pe:Per:Ro:10a}.
Cox 
processes were used in \cite{LaTho11c} to balance Lebesgue
measure and a general diffuse random measure.
In \cite{HueSturm13}, it is shown that optimal shift-invariant transports
between Lebesgue measure and a stationary point process
exist if the average cost (defined in terms of the Palm distribution) is finite. 
In 
\cite{Hue12}, this 
is generalized
to the case of two jointly 
stationary random measures
with the first being absolutely continuous.
General transport 
formulas for random measures invariant
under group actions were derived in 
\cite{La10,Last10a,GentnerLast11,Kallenberg11}. 
In the recent paper \cite{PitmanTang15}, 
a space-time shift (inducing a balancing allocation)
is
used to find the 
Brownian bridge in the path of a Brownian motion.

\section{Preliminaries on mass-stationarity}

\noindent
Let $(\Omega, \cF)$ be the 
measurable space on which 
the random elements in this paper are defined 
(unless otherwise stated).
Let $\bP$ be a measure on $(\Omega, \cF)$.
Note that, as explained in the introduction,
we do not restrict $\bP$ to be a probability measure.

Let $\xi$ be a random measure on $\bR^d$.
For each  $t\in \bR^d$, let  $\theta_t$ be the shift map defined 
by
\begin{align*}
\theta_t \xi (B) := \xi(B+t), \quad \text{for Borel subsets $B$ of }\bR^d.
\end{align*}
Let  $\overset{D}{=}$ denote identity in distribution.
The measure $\xi$ is {\em stationary} (under $\bP$) if 
\begin{align*}
\theta_t \xi \overset{D}{=} \xi, \quad t \in \bR^d, \quad \text{(under $\bP$).}
\end{align*}
Let $\lambda$ be the Lebesgue measure on $\bR^d$.

Let $(E, \cE)$ be a measurable space 
on which the additive group $\bR^d$ acts.
For~$t\in \bR^d$, let $\theta_t$ also denote the map
taking $x$ in $E$ to $\theta_t x$ in $E$.
Let  $X$ be a random element in $(E, \cE)$.
For instance, $X$ could be a 
random field $X~=~(X_s)_{s \in \bR^d}$ and 
$\theta_t X = (X_{t+s})_{s \in \bR^d}$ for $t \in \bR^d$.
Assume that $X$ is {\em shift-measurable}, namely that
the map
from $\bR^d \times E$ to 
$E$ taking $(t, x)$ to  
$\theta_t x$ is measurable. 
Put $\theta_t (X,\xi) = (\theta_t X,\theta_t \xi)$.
The pair $(X,\xi)$ is {\em stationary} (under $\bP$) if
\vspace{-5mm}
\begin{align*}
\theta_t (X,\xi) \overset{D}{=} (X,\xi), \quad t \in \bR^d, \quad \text{(under $\bP$).}
\end{align*}
Let $\ci{\bP}$\! be another measure on $(\Omega, \cF)$,
let $\ci{\xi}$ be another random measure on $\bR^d$,
and let $\ci{X}$\! be another random element in $(E, \cE)$.
Assume that $\ci{\xi}$\! has $0$ in its support $\ci{\bP}$-a.e.
In~this~paper it is always understood
that the distributions of $(X\!, \xi)$ and $(\ci{X}\!, \ci{\xi})$
are  $\sigma$-finite under both $\bP$ and~$\ci{\bP}$.

The pair
$(\ci{X},\ci{\xi})$ is called {\em mass-stationary} (under $\ci{\bP}$) if
for all bounded Borel subsets $C$ of $\bR^d$ with $\lambda(C) > 0$
and $\lambda(
\partial C) = 0$, 
\begin{gather}\label{(1)}
(\theta_{V_C}(\ci{X},\ci{\xi}),V_C + U_C)\overset{D}{=}((\ci{X},\ci{\xi}),U_C)
\quad \text{(under $\ci{\bP}$)}
\end{gather}
\vspace{-5mm}

\noindent
where (under $\ci{\bP}$)

\vspace{-5mm}
\begin{align*}
\text{\qquad the conditional distribution of $U_C$ given  $(\ci{X},\ci{\xi})$
 is uniform on $C$, and\,}
 \end{align*}
 
 \vspace{-12mm}

 \begin{align*} 
\text{\qquad the conditional distribution of $V_C$ given $((\ci{X},\ci{\xi}),U_C)$ is $\ci{\xi}(\cdot|C-U_C)$;}
\end{align*}
here, for any $x\in \bR^d$, $C-x:=\{y-x:y\in C\}$.

For a motivation of this definition and a survey; see \cite{LaTho11b}.
In particular for a simple point process $\ci{\xi}$ on the line, this 
definition is equivalent to 
distributional invariance under shifts of the origin $n$ points forward (or backward).
And for a diffuse $\ci{\xi}$ on the line, it is equivalent to 
distributional invariance under shifts of the origin 
an amount $r$ forward (or backward) in the mass; see \cite{LaMoeTho12}.
Note that in both cases these shifts preserve the measure~$\ci{\xi}$.

Recall (see e.g.\ \cite{Kallenberg}) that
$(\ci{X},\ci{\xi})$ under $\ci{\bP}$ is called a {\em Palm version} 
of a stationary pair $(\hat{X}, \hat{\xi})$ 
defined on some $(\hat{\gW}, \hat{\cF}, \hat{\bP})$
 if for each nonnegative measurable function $f$ and some 
 (and thus each, due to stationarity)
Borel subset $B$ of  $\bR^d$ with $0 < \lambda (B) < \infty$, 
\begin{align} \label{(3.1)}
\ci{\bE}[f(\ci{X},\ci{\xi})] 
= \hat{\bE}\Big{[}\int_B f\big{(}\theta_t (\hat{X}, \hat{\xi})\big{)} \hat{\xi}(dt)\Big{]}
\Big{/}\lambda(B).
\end{align}
In this definition  $(\ci{X}\!,\ci{\xi})$ and $(\hat{X}\!, \hat{\xi})$ are allowed to
have distributions that are only $\sigma$-finite 
and not necessarily probability measures.
The measure $\ci{\bP}$ 
is finite 
if and only if 
$\hat{\xi}$ has finite intensity, that is, 
if and only if $\hat{\bE}[\hat{\xi}(B)]<\infty$ for bounded Borel $B$.
In this case $\ci{\bP}$ 
can be normalized 
to a probability measure.

For a proof of the following result, see \cite{LaTho09}.

\begin{theorem}
\label{T:0}
The pair $(\ci{X}\!, \ci{\xi})$  is mass-stationary under $\ci{\bP}$
if and only if it is the
Palm version of some stationary pair.
\end{theorem}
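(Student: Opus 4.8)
The result is an equivalence, and both directions run on the refined Campbell theorem. The form I would use is that \eqref{(3.1)} is equivalent to $\hat{\bE}\big[\int_{\bR^d} g(t,\theta_t(\hat X,\hat\xi))\,\hat\xi(dt)\big]=\int_{\bR^d}\ci\bE[g(t,(\ci X,\ci\xi))]\,dt$ for every nonnegative measurable $g$, together with its inverted form, obtained by writing $1=\hat\xi(B)^{-1}\int_B\hat\xi(dt)$ on $\{\hat\xi(B)>0\}$ and applying the previous identity:
\[
\hat{\bE}\big[h(\hat X,\hat\xi)\,\mathbf{1}\{\hat\xi(B)>0\}\big]=\ci\bE\Big[\int_B \frac{h(\theta_{-t}(\ci X,\ci\xi))}{\ci\xi(B-t)}\,dt\Big],\qquad 0<\lambda(B)<\infty .
\]
Call $C$ \emph{admissible} if it is bounded with $\lambda(C)>0$ and $\lambda(\partial C)=0$; since $0$ lies in the support of $\ci\xi$, every admissible $C$ satisfies $\ci\xi(C-u)>0$ for a.e.\ $u\in C$.

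\textbf{Palm version $\Rightarrow$ mass-stationary.} Assume $(\ci X,\ci\xi)$ is the Palm version of a stationary pair $(\hat X,\hat\xi)$ and fix an admissible $C$. I would verify \eqref{(1)} by testing it against an arbitrary bounded measurable function $g$ of $(\ci X,\ci\xi)$ and of an $\bR^d$-variable. By the conditioning rules, the right side of \eqref{(1)} becomes $\lambda(C)^{-1}\ci\bE\big[\int_C g((\ci X,\ci\xi),u)\,du\big]$ and the left side becomes $\lambda(C)^{-1}\ci\bE\big[\int_C\ci\xi(C-u)^{-1}\int_{C-u}g(\theta_v(\ci X,\ci\xi),v+u)\,\ci\xi(dv)\,du\big]$. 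I would rewrite the left side via \eqref{(3.1)} with an auxiliary window $B$, substitute $s=v+t$ (using $\theta_v\theta_t=\theta_{v+t}$ and $(\theta_t\hat\xi)(C-u)=\hat\xi(C-u+t)$), and then move the $\hat\xi(dt)$-integration inside, noting that what remains is a covariant function of the remaining point $s$, i.e.\ a function of $(s,\theta_s(\hat X,\hat\xi))$. A second application of the refined Campbell theorem to $s$ turns $\hat{\bE}[\int(\cdots)\,\hat\xi(ds)]$ into $\int\ci\bE[(\cdots)]\,ds$; integrating out the indicator $\mathbf{1}\{t\in B\}$ over $ds$ gives $\lambda(B)$ by translation invariance of $\lambda$, and a final substitution $b=v+u$ collapses the expression, the factor $\ci\xi(C-b)^{-1}$ cancelling against $\ci\xi(C-b)=\int\mathbf{1}\{a\in C-b\}\,\ci\xi(da)$. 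What is left is exactly $\lambda(C)^{-1}\ci\bE\big[\int_C g((\ci X,\ci\xi),b)\,db\big]$, the right side. This direction is a long but routine computation; $\lambda(\partial C)=0$ enters only to keep the denominators positive a.e., and boundedness of $C$ together with $\sigma$-finiteness justifies the Fubini steps.

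\textbf{Mass-stationary $\Rightarrow$ Palm version.} Now I must produce a stationary pair. Fix an admissible $C$ with $0$ in its interior and \emph{define} the law of a pair $(\hat X,\hat\xi)$ by $\hat{\bE}[h(\hat X,\hat\xi)]:=\ci\bE\big[\int_C h(\theta_{-t}(\ci X,\ci\xi))\,\ci\xi(C-t)^{-1}\,dt\big]$, i.e.\ by the inversion formula above read as a definition; one checks this is $\sigma$-finite (from $\sigma$-finiteness of the law of $(\ci X,\ci\xi)$, boundedness of $C$, and positivity of $\ci\xi(C-t)$ a.e.). The heart of the matter is stationarity: unwinding $\hat{\bE}[h(\theta_r(\hat X,\hat\xi))]$ shows it equals the same construction performed with the window $C-r$, so stationarity amounts to the construction being independent of the admissible window. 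I would derive this from mass-stationarity: the $\psi\equiv 1$ specialisation of \eqref{(1)}, namely $\ci\bE\big[\lambda(C)^{-1}\int_C\ci\xi(C-u)^{-1}\int_{C-u}\varphi(\theta_v(\ci X,\ci\xi))\,\ci\xi(dv)\,du\big]=\ci\bE[\varphi(\ci X,\ci\xi)]$ for all bounded measurable $\varphi$, says that averaging $\varphi\circ\theta_v$ against a certain invariantly defined, $\ci\xi$-total-mass-one transport kernel returns $\ci\bE[\varphi]$; applying this for $C$ and for a large admissible $D\supseteq C\cup(C-r)$, and letting the window grow (with $\lambda(\partial C)=0$ controlling the relevant limits), one identifies the $C$- and $(C-r)$-constructions, hence obtains stationarity. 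Finally I would check \eqref{(3.1)} for $(\hat X,\hat\xi)$, i.e.\ that $(\ci X,\ci\xi)$ is its Palm version: \eqref{(3.1)} determines the Palm version uniquely and the inversion formula recovers the stationary law from its Palm version, so the $\hat{\bP}$ just built agrees with the $\hat{\bP}$ rebuilt from its own Palm version, forcing that Palm version to be the original $(\ci X,\ci\xi)$.

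\textbf{Main obstacle.} The forward implication is two Campbell applications glued by Fubini and costs only bookkeeping. The real difficulty is the converse, and within it the single claim that the inversion construction does not depend on the reference window (equivalently, that the constructed pair is stationary). This is precisely where one needs the \emph{full} force of mass-stationarity — invariance for every admissible window, not merely one — and it is here that an approximation over windows, exploiting $\lambda(\partial C)=0$, seems unavoidable.
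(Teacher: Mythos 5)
The paper does not actually give its own proof of Theorem~1: it refers to \cite{LaTho09}, where the argument runs through Mecke's functional characterisation of Palm measures and the machinery of invariant (weighted) transport kernels (cf.\ the discussion of \cite[Theorem~4.1]{LaTho09} in Section~9 of this paper). Your proposal tries a more hands-on route: Campbell computations for the forward direction and the inversion formula for the converse. That is a legitimate strategy, and I have checked that your forward direction does close (the two Campbell applications, the $\lambda(B)$ popping out by translation invariance, and the cancellation of $\ci\xi(C-b)$ all work as you describe, with $\lambda(\partial C)=0$ ensuring that $\ci\xi(C-u)>0$ for a.e.\ $u\in C$).

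The converse, however, has a genuine gap exactly where you flag it. Defining $\hat\bP$ by $\hat\bE[h]:=\ci\bE\bigl[\int_C h(\theta_{-t}(\ci X,\ci\xi))\,\ci\xi(C-t)^{-1}\,dt\bigr]$ gives, by the change of variable you indicate, $\hat\bE[h\circ\theta_r]=\hat\bE_{C-r}[h]$; so stationarity of $\hat\bP$ \emph{is} window-independence $\hat\bP_C=\hat\bP_{C-r}$, and you never actually establish that. The $\psi\equiv 1$ specialisation of \eqref{(1)} tells you that the $C$-transport kernel fixes $\ci\bP$, but it does not, on its own, identify the $C$- and $(C-r)$-inversions; ``apply it for $C$ and for a large $D\supseteq C\cup(C-r)$ and let the window grow'' is not an argument. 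This is where one needs the full strength of \eqref{(1)} (in the strengthened form of Lemma~1 / \eqref{(13)}) and a careful limit, or, as in \cite{LaTho09}, a detour through Mecke's equation and preserving transport kernels. Until that step is supplied, the construction does not yield a stationary pair.

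There are two secondary issues. First, your inversion formula produces only the restriction of $\hat\bP$ to $\{\hat\xi(C)>0\}$ for the chosen bounded $C$; some monotone exhaustion over windows is needed to define the whole $\hat\bP$ and to keep it $\sigma$-finite, and this interacts with the window-independence problem. Second, your final step is circular as written: you invoke injectivity of the inversion map to conclude that the Palm version of $\hat\bP$ is $\ci\bP$, but injectivity is a property of the inversion restricted to genuine Palm measures, which is what you are trying to prove $\ci\bP$ is. This particular circularity can be repaired — once stationarity of $\hat\bP$ is in hand, the Palm relation \eqref{(3.1)} with $B=C$ reduces, after the same change of variables, precisely to the $\psi\equiv 1$ case of mass-stationarity, so one should verify \eqref{(3.1)} directly rather than argue by injectivity. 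But the stationarity gap remains the real obstacle.
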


\section{Construction of the mass-stationary version}
\noindent 
In this 
  first construction
section, we construct the mass-stationary Palm version 
when the stationary version
is given.
Let $(X, \xi)$ be stationary under $\bP$.
We assume that 
\begin{align}\label{convexhull}
\operatorname{conv}(\operatorname{supp}\xi)=\bR^d\quad \text{$\bP$-a.e.}
\end{align}
where $\operatorname{conv}(B)$ is the convex hull of a set $B\subset\bR^d$
while $\operatorname{supp}\xi$ denotes the support of $\xi$.
This is a rather weak assumption. Indeed, if $\bP$ is a probability measure
and $\xi$ is $\bP$-a.s.\ not the null measure, then \eqref{convexhull}
holds; see Theorem 2.4.4 in \cite{SW08}. 

Let $N$ be the simple point process on $\bZ^d$ with a point at \,$i \in \bZ^d$\, 
if and only if $\xi(i + [0,1)^d) > 0$.
Consider the Voronoi cells in $\bZ^d$ obtained 
by associating each $i \in \bZ^d$ to~the point of $N$ that
is closest to $i$, choosing the one with the lowest lexicographic order 
if there are more than one such point. 
These cells contain exactly one point of $N$ and
partition $\bZ^d$  in a shift-invariant way. 
Let $D_i$ be the cell containing $i$ and
let $S_i$ be the vector from the $N$-point in $D_i$ to $i$. 
Set $D = D_0$, $S = S_0$ and $\ci{D} = S + D$.
Note that since $0 \in D$ the vector $S$ takes values in $\ci{D}$.

Let $T$ be a random vector in $[0,1)^d$.
Put 
\begin{gather*}
 (\ci{X}\!, \ci{\xi}) := \theta_{T} \theta_{-S} (X,\xi) \qquad \text{(change of origin).}
\end{gather*}
Our general assumption\, \eqref{convexhull}\, 
and the definition of the Voronoi cells easily imply
that \qquad($\bP$-a.e.) the number of elements in $D$ is finite, 
$|D| < \infty$, 
so  we can define
another measure $\ci{\bP}$ on~$(\gW, \cF)$ by
\begin{gather*}
  \bd\ci{\bP} := 
   \frac{\theta_{-S}\xi([0,1)^d)}{{|D|}} \,\bd{\bP}
   \qquad \text{(change of measure).}
\end{gather*}
Note that the distributions of 
$(X\!, \,\xi)$ and
 $(\ci{X}\!,\, \ci{\xi})$ are 
 $\gs$-finite under $\ci{\bP}$ if they are $\gs$-finite under $\bP$ (and vice versa).
 
 An informal explanation of the above construction of $(\ci{X}\!, \ci{\xi})$ and $\ci{\bP}$ 
 is given after the proof of the following theorem. 
 The proof is quite technical.
 
\begin{theorem}
\label{T:1}
Under $\bP$, let $(X,\xi)$  be stationary and 
\begin{equation*}
  \begin{aligned}
    {}&\txt{the conditional distribution of $T$
    given $(X, \xi)$ be $(\theta_{-S}\xi)(\, \cdot \given [0,1)^d)$.\,\,\,\,}
    \end{aligned}
\end{equation*} 
Then under $\ci{\bP}$\!, $(\ci{X}\!, \ci{\xi})$ is mass-stationary and
\begin{equation*}
  \begin{aligned}
      {}& \text{the conditional distribution of $T$ given $(\ci{X}\!, \ci{\xi})$ is uniform on $[0,1)^d$},\\
    {}& \text{the conditional distribution of  $S$ given $((\ci{X}\!, \ci{\xi}), T)$
    is uniform on $\ci{D}$}.\\
        \end{aligned}
\end{equation*}
Moreover, $(\ci{X}\!, \ci{\xi})$  under $\ci{\bP}$ is the Palm version
of $(X, \xi)$  under $\bP$.
\end{theorem}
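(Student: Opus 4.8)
The plan is to establish three things in sequence: the two conditional-distribution claims for $T$ and $S$ under $\ci{\bP}$, mass-stationarity of $(\ci X,\ci\xi)$, and the Palm identity; and in fact I would reverse the logical order — first prove the Palm identity, then deduce mass-stationarity from Theorem~\ref{T:0}, then read off the conditional distributions (or prove them en route, since they are really the engine of the Palm computation).

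First I would set up the master change-of-variables identity. Fix a nonnegative measurable $f$ on $(E\times\bM)\times\bR^d$ (the extra coordinate carrying $S$ or $T$). By definition of $\ci{\bP}$ and of $(\ci X,\ci\xi)=\theta_T\theta_{-S}(X,\xi)$, together with the stated conditional law of $T$ given $(X,\xi)$,
\begin{align*}
\ci\bE\big[f\big((\ci X,\ci\xi),S\big)\big]
=\bE\Big[\frac{\theta_{-S}\xi([0,1)^d)}{|D|}\int_{[0,1)^d} f\big(\theta_t\theta_{-S}(X,\xi),S\big)\,(\theta_{-S}\xi)\big(dt\mid[0,1)^d\big)\Big]
=\bE\Big[\frac{1}{|D|}\int_{[0,1)^d} f\big(\theta_t\theta_{-S}(X,\xi),S\big)\,(\theta_{-S}\xi)(dt)\Big].
\end{align*}
Now the crucial combinatorial step: sum this over the shift-invariant Voronoi partition. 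Because the cells $\{D_i\}$ tile $\bZ^d$ in a shift-equivariant way, for any $g\ge0$ one has the mass-transport-type identity $\bE[g(0,S_0)]=\bE\big[\sum_{i\in D_0} g(-S_i,\, \text{something})\big]$; concretely I want to rewrite the sum over $[0,1)^d$ of $\theta_{-S}\xi$-mass as a sum over the cell $D$ of contributions $\xi(j+[0,1)^d)$ for $j$ ranging over the $N$-points indexed by $D$, shifted appropriately, and use stationarity of $(X,\xi)$ under $\bP$ (shift by the $N$-point of the cell containing $0$) to turn the $\frac1{|D|}$-weighted average over cell members into an honest integral $\int_B f(\theta_t(X,\xi),\cdot)\,\xi(dt)/\lambda(B)$ with $B$ a fundamental domain. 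This is exactly the shape of \eqref{(3.1)}, so once the bookkeeping closes, $(\ci X,\ci\xi)$ under $\ci\bP$ is the Palm version of $(X,\xi)$ under $\bP$.

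With the Palm identity in hand, Theorem~\ref{T:0} immediately gives that $(\ci X,\ci\xi)$ is mass-stationary under $\ci\bP$. For the conditional laws, I would specialize $f$: taking $f$ depending only on the $S$-coordinate and a bounded function of $(\ci X,\ci\xi)$ and comparing with the known uniform-on-$\ci D$ prescription inside a single Voronoi cell (the cell of $N$ is by construction a uniform random lattice point of the cell, once the $\frac1{|D|}$ normalization is accounted for) yields that $S$ given $((\ci X,\ci\xi),T)$ is uniform on $\ci D$; and taking $f$ depending only on the $T$-coordinate and observing that the $(\theta_{-S}\xi)(\cdot\mid[0,1)^d)$ weighting was cancelled exactly against the $\theta_{-S}\xi([0,1)^d)$ factor in $\bd\ci\bP$ leaves $T$ distributed as Lebesgue measure normalized on $[0,1)^d$, i.e.\ uniform, independently of $(\ci X,\ci\xi)$.

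The main obstacle is the combinatorial/measure-theoretic step in the second paragraph: making the passage from the $\bP$-expectation weighted by $\theta_{-S}\xi([0,1)^d)/|D|$ to the Palm formula rigorous. One must (i) verify $|D|<\infty$ $\bP$-a.e.\ from \eqref{convexhull} so all sums are finite, (ii) handle the lexicographic tie-breaking in the Voronoi construction so that the cells genuinely partition $\bZ^d$ measurably and shift-equivariantly, (iii) check the $\sigma$-finiteness bookkeeping so that Fubini-type interchanges of sum, integral and expectation are legitimate under the (possibly infinite) measures $\bP$ and $\ci\bP$, and (iv) confirm that the fundamental-domain $B$ one ends up with can be replaced by an arbitrary bounded Borel $B$ with $0<\lambda(B)<\infty$ using stationarity — the ``some (and thus each)'' clause in the definition of Palm version. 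Everything else is routine once this identity is established.
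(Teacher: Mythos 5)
Your overall plan is the same as the paper's: establish the Palm identity, invoke Theorem~\ref{T:0} for mass-stationarity, then read off the conditional laws. The master identity you write down, with the cancellation of $\theta_{-S}\xi([0,1)^d)$ against the Radon--Nikodym density, is exactly the paper's starting point. And you correctly flag the ``combinatorial step'' as the crux of the Palm claim. However, you leave that step at the level of a gesture. The paper's precise mechanism is worth recording because it is not obviously reconstructible from your description: starting from the right-hand side one conditions on the event $\{S=i\}$, replaces $\theta_{-S}$ by $\theta_{-i}$, uses stationarity $\theta_i(X,\xi)\overset{D}{=}(X,\xi)$ to pull the shift through $\bE$ (which turns $S$ into $S_i$, $D$ into $D_i$, and $\xi$ into $\xi$), observes that $\{S_i=i\}=\{S=0,\,i\in D\}$ with $D_i=D=\ci D$ on that event, notes the integral vanishes unless $S=0$ (since $S\neq 0$ forces $\xi([0,1)^d)=0$), and only then sums over $i\in\bZ^d$ to collapse $\sum_i 1_{\{i\in\ci D\}}/|D|=1$. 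Your ``shift by the $N$-point of the cell containing $0$'' does not quite capture this conditioning-then-summing structure.

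The more serious gap is the conditional law of $S$ given $((\ci X,\ci\xi),T)$. You assert this follows ``by construction'' from the $1/|D|$ normalization, and you do not list it among your obstacles (i)--(iv). In fact it is one of the genuinely delicate points of the proof: having the unconditional Palm formula (paper's (5)--(6)) does not directly hand you the joint conditional law of $(T,S)$. The paper has to choose a specific test function $g(t,(X,\xi))=\frac{1_{\{i\in D^{(-t)}\}}}{|D^{(-t)}|}1_{\{t\in B\}}f(X,\xi)$, where $D^{(-t)}$ is the analogue of $\ci D$ computed from $\theta_{-t}(X,\xi)$; then it uses the identities $g(t,\theta_t(X,\xi))=\frac{1_{\{i\in\ci D\}}}{|D|}1_{\{t\in B\}}f(\theta_t(X,\xi))$ and $g(T,(\ci X,\ci\xi))=\frac{1_{\{i\in\ci D\}}}{|D|}1_{\{T\in B\}}f(\ci X,\ci\xi)$ (the latter relying on $\theta_{-T}(\ci X,\ci\xi)=\theta_{-S}(X,\xi)$ and $D^{(-S)}=\ci D$), and finally compares the result with the already-established displayed identity (4) to extract the conditional uniformity of $S$. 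None of this is visible in your sketch, and ``specialize $f$ and compare with the known prescription'' will not produce the needed factor $\frac{1_{\{i\in\ci D\}}}{|D|}$ as a $\ci{\bP}$-conditional probability given $(\ci X,\ci\xi,T)$ without this extra construction; in particular it must be shown that $\ci D$ is measurable with respect to $((\ci X,\ci\xi),T)$ at all. So: the architecture is right, but the $S$-law step is a real missing idea, not a routine verification.
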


\begin{proof}
We begin by proving that for all nonnegative measurable $f$, all Borel subsets $B$
of $[0,1)^d$ and all $i \in \bZ^d$,
\begin{align}\label{(4)}
\ci{\bE}[1_{\{S = i\}}1_{\{T \in B\}}f(\ci{X}\!, \ci{\xi})]
= \bE\Big{[}\frac{1_{\{i \in \ci{D}\}}}{{|D|}}\int_B f(\theta_t(X,\xi))\xi(dt)\Big{]}.
\end{align}
The definition of $\ci{\bP}$ and $(\ci{X},\ci{\xi})$ yields the first step in
\begin{align*}
\ci{\bE}&[1_{\{S = i\}}1_{\{T \in B\}}f(\ci{X}\!, \ci{\xi})] 
=
\bE\left[\frac{\theta_{-S}\xi([0,1)^d)}{{|D|}}1_{\{S = i\}}
1_{\{T \in B\}}f(\theta_T\theta_{-S}(X,\xi))\right]
\\ &= \bE\left[\frac{1_{\{S = i\}}}{{|D|}}\int_B f(\theta_t\theta_{-S}(X,\xi))
\theta_{-S}\xi(dt)\right]
\,\, \text{(conditional distribution of $T$)}
\\ &= \bE\left[\frac{1_{\{S = i\}}}{{|D|}}\int_B f(\theta_t\theta_{-i}(X,\xi))
\theta_{-i}\xi(dt)\right]\,\,\,\,\, \text{(use $S = i$)}.
\end{align*}
Now use $\theta_i(X,\xi)\overset{D}{=}(X,\xi)$ to replace
$\theta_{-i}(X,\xi)$
by $(X,\xi)$, $\theta_{-i}\xi(dt)$ by $\xi(dt)$, $S$~by~$S_i$ and $|D|$
by $|D_i|$ to obtain
\begin{align*}
\ci{\bE}&[1_{\{S = i\}}1_{\{T \in B\}}f(\ci{X}\!, \ci{\xi})] 
= \bE\left[\frac{1_{\{S_i = i\}}}{{|D_i|}}\int_B f(\theta_t(X,\xi))\xi(dt)\right].
\end{align*}
Note that
$\{S_i = i\} = \{S = 0, i \in D\}$ and that on this event $D_i = D=\ci{D}$. Thus
\begin{align*}
\ci{\bE}&[1_{\{S = i\}}1_{\{T \in B\}}f(\ci{X}\!, \ci{\xi})] 
= \bE\left[\frac{1_{\{S = 0,\, i \in  \ci{D}\}}}{{|D|}}\int_B f(\theta_t(X,\xi))\xi(dt)\right].
\end{align*}
Now \eqref{(4)} follows by noting that
when  $S \not= 0$ then $\xi(B)=0$ so the integral is $0$. 

Sum over  $i \in \bZ^d$ in \eqref{(4)} to obtain
\begin{gather}\label{(5)}
\ci{\bE}[1_{\{T \in B\}}f(\ci{X}\!, \ci{\xi})]
= \bE\Big{[}\int_B f(\theta_t(X,\xi))\xi(dt)\Big{]}.
\end{gather}
Use the stationarity to see that the measure defined by keeping $f$ fixed 
and letting $B$ on the right-hand side vary over the Borel subsets of $\bR^d$ 
is shift-invariant and thus of the form $\ci{\bE}[f(\ci{X}\!, \ci{\xi})]\gl$
where $\gl$ is the Lebesgue measure. 
This yields the Palm claim and, due to Theorem~1, the mass-stationarity claim.
This also yields
\begin{gather*}
\ci{\bE}[1_{\{T \in B\}}f(\ci{X}\!, \ci{\xi})]
= \ci{\bE}[\gl(B)f(\ci{X}\!, \ci{\xi})]\,\text{ for Borel subsets $B$ of  $[0,1)^d$.}
\end{gather*}
Since this holds for all nonnegative measurable $f$,
the conditional distribution of $T$ given $(\ci{X}\!, \ci{\xi})$ is uniform on $[0,1)^d$
under $\ci{\bP}$.

It only remains to establish that the conditional distribution of  
$S$ given $((\ci{X}\!, \ci{\xi}), T)$ is uniform on $\ci{D}$
under $\ci{\bP}$.
For that purpose, note that \eqref{(5)} implies that for all nonnegative measurable $g$
\begin{gather}\label{(6)}
\ci{\bE}[g(T, (\ci{X}\!, \ci{\xi}))]
= \bE\Big{[}\int_{[0,1)^d} g(t, \theta_t(X,\xi))\xi(dt)\Big{]}.
\end{gather}
For $t\in \bR^d$, let $D^{(-t)}$ be the analogue of $\ci{D}$ when $(X\!, \xi)$
is replaced by $\theta_{-t}(X, \xi)$. 
Fix an $i\in \bZ^d$ and define $g$ by 
\begin{align*}
g(t, (X, \xi)) &= \frac{1_{\{i \in D^{(-t)}\}}}{{|D^{(-t)}|}}1_{\{t \in B\}}f(X, \xi).
\end{align*}
Note that 
\begin{align*}
g(t, \theta_t(X, \xi)) = \frac{1_{\{i \in \ci{D}\}}}{{|D|}}1_{\{t \in B\}}f(\theta_t(X, \xi))
\end{align*}
and that, since $\theta_{-T}(\ci{X}\!, \ci{\xi}) = \theta_{-S}(X,\xi)$
and since $D^{(-S)} =  \ci{D}$, we also have 
\begin{align*}
g(T, (\ci{X}\!, \ci{\xi})) &= \frac{1_{\{i \in \ci{D}\}}}{{|D|}}1_{\{T \in B\}}f(\ci{X}\!, \ci{\xi}).
\end{align*}
Apply these two observations in \eqref{(6)} to obtain
\begin{gather*}
\ci{\bE}\left[\frac{1_{\{i \in \ci{D}\}}}{{|D|}}1_{\{T \in B\}}f(\ci{X}\!, \ci{\xi})\right]
= \bE \left[\frac{1_{\{i \in \ci{D}\}}}{{|D|}}\int_{B} f(\theta_t(X,\xi))\xi(dt)\right].
\end{gather*}
Compare this with \eqref{(4)} to get
\begin{align*}
\ci{\bE}[1_{\{S = i\}}1_{\{T \in B\}}f(\ci{X}\!, \ci{\xi})]
=\ci{\bE}\left[\frac{1_{\{i \in \ci{D}\}}}{{|D|}}1_{\{T \in B\}}f(\ci{X}\!, \ci{\xi})\right].
\end{align*}
This means that the conditional distribution of  
$S$ given $((\ci{X}\!, \ci{\xi}), T)$   is uniform on $\ci{D}$
under $\ci{\bP}$, and we are through.
\end{proof}

\noindent
The reason for the introduction of the point process $N$ is to enable a (shift-invariant) 
splitting of the mass of $\xi$ into finite clumps each having a reference point.
Now for each point $j$ of $N$, associate to each $i \in D_j$
the mass $\theta_j\xi([0,1)^d)/|D_j|$. Thus, all $i\in D_j$ 
have an equal share of the total mass of $\xi$ in the box $j+[0,1)^d$.
In order to guess at how the stationary $(X,\xi)$ 
might look when 
seen from a typical location in the mass of $\xi$, imagine  we could choose 
an $i\in \bZ^d$ according to this
redistribution of the mass of $\xi$.
Let $j$ be the point such that $i\in D_j$ and note that $j$ is
determined~by~$i$.
Choose $t \in [0,1)^d$
according to the probability measure $\theta_j \xi(\cdot \,| \,[0,1)^d )$
[note that, due to stationarity, $\theta_j \xi$ has no mass on the boundary
of the sets $[0,1)^d$ a.e.\! $\bP$].
Then $j + t$ would be placed in $\bR^d$ according 
 to the mass-distribution of~$\xi$.
Thus $(X, \xi)$ seen from 
 this typical location
 in the mass of $\xi$ should be mass-stationary.
 
Compare now the above informal argument
with the construction preceding Theorem~2.
Firstly, due to stationarity, what we see from $i$ [once $i$ has been chosen]
  is distributionally
the same as what we see from the origin $0$ after 
biasing  $\bP$ by $\theta_{-S}\xi([0,1)^d)/|D|$.
This fits with the definition of $\ci{\bP}$.
Secondly, 
$t$ 
is chosen according to $\theta_j \xi(\cdot | [0,1)^d )$
which only depends on $i$ through the point $j$.
This fits with the conditional distribution of $T$
given $(X,\xi)$  
being $\theta_{-S} \xi(\cdot | [0,1)^d )$.
Thirdly,
 the mass associated with $i$
is the same for all $i\in D_j$ and thus the vector $i-j$ 
is uniform in $D_j - j$.
This
fits with the conditional distribution of $S$ given 
$((\ci{X}\!, \ci{\xi}), T)$ being uniform on $\ci{D} = S + D$. 

It is not clear from the above discussion why the conditional distribution 
of $T$ given $(\ci{X}\!,\ci{\xi})$ should be uniform under $\ci{\bP}$.
However, according to the next theorem, 
this is exactly what is needed in order to reverse
the implication in Theorem 2.

\section{Construction of the stationary version}

\noindent 
In this 
  second construction
section, we construct the stationary Palm version 
when the mass-stationary version
is given.
Let $(\ci{X}, \ci{\xi}$) be mass-stationary under $\ci{\bP}$. 
Similarly as in the previous section we assume that
\begin{align}\label{convexhull2}
\operatorname{conv}(\operatorname{supp}\ci{\xi})=\bR^d\quad \text{$\ci{\bP}$-a.e.}
\end{align}
Since \eqref{convexhull2} is invariant under shifts of $\ci{\xi}$
it follows as at \eqref{convexhull} that \eqref{convexhull2} holds
if $(\ci{X}, \ci{\xi})$ is the Palm version
of a stationary pair with a finite distribution.

Let $T$ be a random vector in $[0,1)^d$ with $\ci{\bP}(T \notin (0,1)^d) = 0$.
Let $\ci{N}$ be the simple point process on $\bZ^d$ with a point at $i \in \bZ^d$ 
if and only if $\theta_{-T}\ci{\xi}(i + [0,1)^d) > 0$. 
Note that $\ci{N}(\{0\}) = 1$ since  $\ci{\xi}$ has $0$ in its support  ($\ci{\bP}$-a.e.).
Partition $\bZ^d$  into the Voronoi cells 
each containing 
exactly one point of $\ci{N}$.
Let $\ci{D}$ be the cell containing $0$. 

Let $S$ be a random vector taking values in $\ci{D}$ 
and put 
\begin{gather*}
 (X,\xi) :=  \theta_{S}\theta_{-T} (\ci{X}\!, \ci{\xi}) \qquad \text{(change of origin).}
\end{gather*}
Again our general assumption \eqref{convexhull2} 
and the definition of the Voronoi cells easily imply
that ($\ci{\bP}$-a.e.) the number of elements in $\ci{D}$ is finite,
$|\ci{D}| < \infty$.
Also since $\ci{\bP}(T \notin (0,1)^d) = 0$, we have $\ci{\bP}(\theta_{-T}\ci{\xi}([0,1)^d)=0)=0$.
Thus we can define another
 measure $\bP$ on $(\gW, \cF)$ by
\begin{gather*}
  \bd\bP := \frac{|\ci{D}|}{\theta_{-T}\ci{\xi}([0,1)^d)} \,\bd\ci{\bP}  \qquad \text{(change of measure).}
\end{gather*}
Note that 
the above construction of $(X, \xi)$ and  $\bP$ 
from $(\ci{X}, \ci{\xi})$ and  $\ci{\bP}$ is the reversal of the construction
in the previous section. 
We now reverse Theorem~2.

\begin{theorem}
\label{T:2}
Under $\ci{\bP}$\!, let $(\ci{X}\!, \ci{\xi})$  be mass-stationary and
\begin{equation*}
  \begin{aligned}
    {}& \text{the conditional distribution of $T$ given $(\ci{X}\!, \ci{\xi})$ be uniform on $[0,1)^d$},\\
    {}& \text{the conditional distribution of  $S$ given $((\ci{X}\!, \ci{\xi}), T)$
    be uniform on $\ci{D}$}.
        \end{aligned}
\end{equation*}
Then under $\bP$, $(X, \xi)$ is stationary and
\begin{equation*}
  \begin{aligned}
    {}&\txt{the conditional distribution of $T$
    given $(X, \xi)$ is $(\theta_{-S}\xi)(\,\cdot \given [0,1)^d)$.}\quad
    \end{aligned}
\end{equation*} 
Moreover, $(\ci{X}\!, \ci{\xi})$  under $\ci{\bP}$ is the Palm version
of $(X, \xi)$  under $\bP$.
\end{theorem}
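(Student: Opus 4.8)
The plan is to verify the Palm formula \eqref{(3.1)} directly for $(X,\xi)$ under $\bP$ and the given pair $(\ci{X},\ci{\xi})$ under $\ci{\bP}$, taking $B=[0,1)^d$, and then to appeal to Theorem~\ref{T:0} (and the elementary characterisation of stationarity via the Palm relation) for the remaining claims. The key point is that the construction in this section is, by design, the reversal of the construction in Section~2: the change of measure multiplies back by $|\ci{D}|/\theta_{-T}\ci{\xi}([0,1)^d)$ and the change of origin composes $\theta_S\theta_{-T}$ rather than $\theta_T\theta_{-S}$. So I expect the computation to run along the same lines as the proof of Theorem~\ref{T:1}, read backwards.

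First I would compute, for nonnegative measurable $f$ and a Borel subset $B$ of $[0,1)^d$,
\begin{align*}
\bE[1_{\{T\in B\}}f(\theta_t(X,\xi))\cdots]
\end{align*}
— more precisely, I would start from
\begin{gather*}
\bE\Big[\int_{[0,1)^d} f\big(\theta_t(X,\xi)\big)\,\xi(dt)\Big]
= \ci{\bE}\Big[\frac{|\ci{D}|}{\theta_{-T}\ci{\xi}([0,1)^d)}\int_{[0,1)^d} f\big(\theta_t\theta_S\theta_{-T}(\ci{X},\ci{\xi})\big)\,\theta_S\theta_{-T}\ci{\xi}(dt)\Big],
\end{gather*}
using the definitions of $\bP$ and $(X,\xi)$ and the substitution $\xi = \theta_S\theta_{-T}\ci{\xi}$. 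The next step is a change of variables in the inner integral: writing $\theta_S\theta_{-T}\ci{\xi}(dt) = \theta_{-T}\ci{\xi}(d(t+S))$ and shifting, the integral over $t\in[0,1)^d$ against $\theta_S\theta_{-T}\ci{\xi}$ becomes an integral over $\theta_{-T}\ci{\xi}$ restricted to $S+[0,1)^d$, and $\theta_t\theta_S\theta_{-T}(\ci{X},\ci{\xi})$ is rewritten in terms of the original coordinates of $(\ci{X},\ci{\xi})$. Then I would use the hypothesis that $S$ given $((\ci{X},\ci{\xi}),T)$ is uniform on $\ci{D}$ to integrate out $S$: summing the contributions of $S=i$ over $i\in\ci{D}$ and dividing by $|\ci{D}|$ exactly cancels the factor $|\ci{D}|$ and reassembles $\sum_{i\in\ci{D}} \theta_{-T}\ci{\xi}$ restricted to $i+[0,1)^d$ into $\theta_{-T}\ci{\xi}$ restricted to $\bigcup_{i\in\ci{D}}(i+[0,1)^d)$. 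Finally I would use that $T$ given $(\ci{X},\ci{\xi})$ is uniform on $[0,1)^d$ together with the fact that $\ci{N}$ is built from $\theta_{-T}\ci{\xi}$ exactly so that the cell $\ci{D}$ collects all the boxes carrying mass in the clump of $0$; averaging over $T\in[0,1)^d$ should collapse everything to a constant multiple of $\ci{\bE}[f(\ci{X},\ci{\xi})]$, the multiple being $\lambda([0,1)^d)=1$, which is \eqref{(3.1)}.

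Once \eqref{(3.1)} holds for $B=[0,1)^d$, the Palm claim for general $B$ follows from the standard shift-invariance argument already used in the proof of Theorem~\ref{T:1}: the set function $B\mapsto \bE[\int_B f(\theta_t(X,\xi))\,\xi(dt)]$ is shift-invariant once $\xi$ is stationary, hence a multiple of Lebesgue measure — but here stationarity of $(X,\xi)$ is itself something to be shown, so the logical order must be the reverse. The clean way is to invoke Theorem~\ref{T:0}: since $(\ci{X},\ci{\xi})$ is mass-stationary under $\ci{\bP}$, it is the Palm version of \emph{some} stationary pair $(\hat X,\hat\xi)$; running the Section~2 construction on $(\hat X,\hat\xi)$ reproduces a pair mass-stationary under the associated Palm measure, and by the uniqueness built into Theorem~\ref{T:1} (the conditional laws of $T$ and $S$ pin down the construction) this Palm pair must coincide in distribution with $(\ci{X},\ci{\xi})$ under $\ci{\bP}$, while the recovered stationary pair coincides with $(X,\xi)$ under $\bP$. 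The conditional-law statement for $T$ given $(X,\xi)$ then reads off from \eqref{(4)} applied to $(\hat X,\hat\xi)$, exactly as in Section~2.

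The main obstacle I anticipate is bookkeeping in the change-of-variables step: one has to be careful that the boxes $i+[0,1)^d$ with $i\in\ci{D}$ are precisely those charged by $\theta_{-T}\ci{\xi}$ within the Voronoi clump of $0$, and that no mass sits on box boundaries (this uses $\ci{\bP}(T\notin(0,1)^d)=0$ so that $\theta_{-T}\ci{\xi}$ a.e.\ puts no mass on $\partial([0,1)^d)$, by the analogue of the stationarity remark in Section~2). Getting the two uniform-conditioning integrations and the two $\theta$-substitutions to interlock in the right order — rather than proving an identity off by the factor $|\ci{D}|$ or by a shift — is where the care is needed; the rest is a transcription of the Theorem~\ref{T:1} argument with the roles of $(X,\xi)$ and $(\ci{X},\ci{\xi})$ interchanged.
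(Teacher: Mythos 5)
Your final ``clean way'' is essentially the paper's own proof. The paper makes the matching step explicit: it first notes that both $(\ci{\hat{X}},\ci{\hat{\xi}})$ under $\ci{\hat{\bP}}$ (produced by running the Section~2 construction on the stationary $(\hat{X},\hat{\xi},\hat{T})$ supplied by Theorem~\ref{T:0}) and the given $(\ci{X},\ci{\xi})$ under $\ci{\bP}$ are Palm versions of the same stationary pair, hence equal in distribution; it then observes that the conditional laws of $(\hat{T},\hat{S})$ given $(\ci{\hat{X}},\ci{\hat{\xi}})$ (these are uniform, by Theorem~\ref{T:1}) coincide with those of $(T,S)$ given $(\ci{X},\ci{\xi})$ (uniform, by hypothesis), so that the full quadruples $(\ci{\hat{X}},\ci{\hat{\xi}},\hat{T},\hat{S})$ and $(\ci{X},\ci{\xi},T,S)$ have the same distribution under the respective Palm measures. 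Because the reverse construction $(X,\xi,T)=(\theta_S\theta_{-T}(\ci{X},\ci{\xi}),T)$ and the change of measure $\bd\bP=\bigl(|\ci{D}|/\theta_{-T}\ci{\xi}([0,1)^d)\bigr)\bd\ci{\bP}$ are deterministic functions of the quadruple, the distributional identity transfers to $(X,\xi,T)$ under $\bP$ versus $(\hat{X},\hat{\xi},\hat{T})$ under $\hat{\bP}$, which yields stationarity, the Palm claim, and the stated conditional law of $T$ all at once. Your phrase ``the uniqueness built into Theorem~\ref{T:1}'' has the right instinct but is slightly misplaced: what does the work is matching the \emph{joint} law of the quadruple, so that the deterministic reversal produces matching stationary pairs.

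Your first-half plan --- verifying \eqref{(3.1)} directly for $B=[0,1)^d$ --- founders on precisely the point you flagged: the definition of the Palm version presupposes that the pair on the right-hand side is stationary, and checking the integral identity for a single $B$ does not by itself give stationarity of $(X,\xi)$ under $\bP$ (the shift-invariance step in Theorem~\ref{T:1}'s proof uses stationarity as a hypothesis, not as a conclusion). So the detour through Theorem~\ref{T:0} is not a convenience but a logical necessity, and the paper takes it.
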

\begin{proof}
Due to Theorem 1, there is a stationary
$(\hat{X}, \hat{\xi})$ defined on some measure space
$(\hat{\gW}, \hat{\cF}, \hat{\bP})$ 
such that
$(\ci{X}\!, \ci{\xi})$ under $\ci{\bP}$ is the Palm version of
$(\hat{X}, \hat{\xi})$. 
It is no restriction to let $(\hat{\gW}, \hat{\cF}, \hat{\bP})$
be large enough to support a $\hat{T}$ such that
\begin{align*}
\,\,\,\qquad\text{the conditional distribution 
of $\hat{T}$ given $(\hat{X}, \hat{\xi})$ is 
$(\theta_{-\hat{S}}\hat{\xi})(\,\cdot \given [0,1)^d)$}\qquad\qquad\,\,\,\,
\end{align*}
where $\hat{S}$ is obtained from $(\hat{X}, \hat{\xi})$
in the same way as $S$  from $(X, \xi)$.

Obtain $(\ci{\hat{X}}\!, \ci{\hat{\xi}})$ and $\ci{\hat{\bP}}$
from  $(\hat{X}, \hat{\xi}, \hat{T})$ and $\hat{\bP}$ in the same way 
as $(\ci{X}\!, \ci{\xi})$ and $\ci{\bP}$  in Section~3  
 is obtained from $(X, \xi, T)$ and $\bP$.
Then, due to Theorem~2, $(\ci{\hat{X}}\!, \ci{\hat{\xi}})$ under $\ci{\hat{\bP}}$
is the Palm version of the stationary $(\hat{X}, \hat{\xi})$.
But so is  $(\ci{X}\!, \ci{\xi})$ under $\ci{\bP}$. 
Therefore $(\ci{\hat{X}}, \ci{\hat{\xi}})$ under $\ci{\hat{\bP}}$
has the same distribution
as $(\ci{X}, \ci{\xi})$ under $\ci{\bP}$.
Also, due to Theorem~2 and our assumptions,
the conditional distribution of $(\hat{T}, \hat{S})$ given 
$(\ci{\hat{X}}\!, \ci{\hat{\xi}})$ under $\ci{\hat{\bP}}$ is
the same as that of  $(T, S)$ given $(\ci{X}, \ci{\xi})$ under $\ci{\bP}$.
Thus 
$$
\text{$(\ci{X}\!, \ci{\xi}\!,T, S)$ under $\ci{\bP}$
has the same distribution as $(\ci{\hat{X}}\!, \ci{\hat{\xi}}\!, \hat{T}, \hat{S})$ 
under $\ci{\hat{\bP}}$\!.}
$$
Now $(X, \xi, T)$ and $\bP$ are obtained in the same
way from $(\ci{X}, \ci{\xi},T, S)$ and $\ci{\bP}$
as $(\hat{X}, \hat{\xi}, \hat{T})$ and $\hat{\bP}$ 
from $(\ci{\hat{X}}, \ci{\hat{\xi}}, \hat{T}, \hat{S})$ and $\ci{\hat{\bP}}$.
Thus the distribution of $(X, \xi,T)$ under $\bP$ 
is the same as that of
$(\hat{X}, \hat{\xi}, \hat{T})$ under $\hat{\bP}$,
as desired. 
\end{proof}

Theorem 3 can be seen as an explicit version
of the {\em inversion formula} in~\cite{Mecke67};
see \cite[(2.7)]{LaTho09} for  a general version
of this formula.

\section{Constructions in the density case}

In this last construction section, let $\xi$ have a 
 density field $Z$, that is, 
let $Z = (Z_s)_{s\in\bR^d}$ be a shift-measurable
random field taking values in $[0,\infty)$ and such that
\begin{gather*}
\xi(ds) = Z_s ds.
\end{gather*} 
We shall now show that in this case
there is no need for a change of origin
 in order to go from stationarity to mass-stationarity, 
only a change of measure is needed.
\begin{theorem}
Let  
$\xi$ have a density  field $Z$.  
If $(X, Z)$ is stationary under a measure $\bP$
then $((X, Z), \xi)$ is mass-stationary under the measure $\ci{\bP}$
defined by 
\begin{gather}\label{dd}
    \bd\ci{\bP} = Z_0\bd\bP
\end{gather}
and $((X, Z), \xi)$  under $\ci{\bP}$ is the Palm version of 
$((X, Z), \xi)$  under $\bP$.
\end{theorem}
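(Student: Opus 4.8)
The plan is to verify directly that $((X,Z),\xi)$ under $\ci{\bP}$ satisfies the defining identity \eqref{(3.1)} of a Palm version of $((X,Z),\xi)$ under $\bP$, with the box $B=[0,1)^d$ there; mass-stationarity then follows at once from Theorem~\ref{T:0}. A preliminary observation: since $\xi(B)=\int_B Z_s\,ds$ is a shift-equivariant measurable functional of $Z$ (indeed $\theta_t\xi$ has density field $\theta_t Z$), stationarity of $(X,Z)$ under $\bP$ carries over to the triple $((X,Z),\xi)$; moreover $Z_0$ is a well-defined nonnegative finite random variable, so $\ci{\bP}$ is a genuine measure, $\sigma$-finite whenever $\bP$ is.

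The computation is short. Fix a nonnegative measurable $f$. By the definition of $\ci{\bP}$ in \eqref{dd},
\[
\ci{\bE}[f((X,Z),\xi)]=\bE[Z_0\,f((X,Z),\xi)].
\]
For each fixed $t\in\bR^d$, apply the identity in distribution $\theta_t((X,Z),\xi)\overset{D}{=}((X,Z),\xi)$ to the nonnegative functional $((x,z),\mu)\mapsto z_0\,f((x,z),\mu)$ and use $(\theta_t Z)_0=Z_t$:
\[
\bE[Z_0\,f((X,Z),\xi)]=\bE[(\theta_t Z)_0\,f(\theta_t((X,Z),\xi))]=\bE[Z_t\,f(\theta_t((X,Z),\xi))].
\]
Integrate over $t\in[0,1)^d$ against Lebesgue measure (total mass $1$ on the unit box), use Tonelli, and then $\xi(dt)=Z_t\,dt$:
\[
\ci{\bE}[f((X,Z),\xi)]=\int_{[0,1)^d}\bE[Z_t\,f(\theta_t((X,Z),\xi))]\,dt=\bE\Big[\int_{[0,1)^d}f(\theta_t((X,Z),\xi))\,\xi(dt)\Big].
\]
Since $\lambda([0,1)^d)=1$, this is exactly \eqref{(3.1)} with $B=[0,1)^d$, so $((X,Z),\xi)$ under $\ci{\bP}$ is the Palm version of $((X,Z),\xi)$ under $\bP$, and hence mass-stationary by Theorem~\ref{T:0}.

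The remaining points are bookkeeping rather than mathematics. The standing requirement that $\xi$ have $0$ in its support $\ci{\bP}$-a.e.\ need not be imposed separately: the right-hand side of \eqref{(3.1)} only integrates over $t\in\operatorname{supp}\xi$, and shifting such a $t$ to the origin puts $0$ into $\operatorname{supp}\theta_t\xi$. Likewise, $\sigma$-finiteness of the distribution of $((X,Z),\xi)$ under $\ci{\bP}$ follows from that under $\bP$, just as in Section~2. I do not anticipate a genuine obstacle; the only care needed is the routine justification of Tonelli for $\sigma$-finite, possibly infinite, measures, and the substance of the theorem is simply the observation that weighting $\bP$ by the density value $Z_0$ already performs the Palm biasing, so that no change of origin is required.
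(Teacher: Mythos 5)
Your proof is correct and follows essentially the same route as the paper: definition of $\ci{\bP}$, stationarity, Fubini/Tonelli, then invoke Theorem~\ref{T:0}. The only cosmetic difference is that you specialize to $B=[0,1)^d$ while the paper keeps a general $B$ with $0<\lambda(B)<\infty$, which changes nothing.
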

\begin{proof}
Let $B$ be a Borel subset of $\bR^d$ such that 
 $0 < \gl(B) < \infty$ and let $f$ be a nonnegative measurable function. Then
\begin{align*}  \label{a}
 \ci{\bE}[&f(((X, Z), \xi))]  
 = \bE[f(((X, Z), \xi)) Z_0] \quad\,\,\,\,  
  \text{(by definition of \ci{\bP})} \\
 & = \frac{1}{\lambda(B)}\int_B \! \bE[f(\gq_s ((X, Z), \xi)) Z_s]\mrm{d}s  
 \qquad\,  \text{(by stationarity)}\\
  & = \frac{1}{\lambda(B)}\bE\!\left[\int_B f(\gq_s ((X, Z), \xi)) 
  \xi(\mrm{d}s)\right] 
  \quad\,\, \text{(by Fubini).}
\end{align*}
Thus  $((X, Z), \xi)$ under $\ci{\bP}$ is the Palm version of  $((X, Z), \xi)$
under $\bP$. And mass-stationarity follows from Theorem~1.
\end{proof}

In order to reverse this theorem, -- go from mass-stationarity to stationarity 
without a change of origin, -- we shall assume that 
the density  field is strictly positive. 

\begin{theorem}
Let  
$\xi$ have a density  field 
$Z$ 
which is strictly positive everywhere.
If $((X, Z), \xi)$ is mass-stationary under a measure $\ci{\bP}$
then $(X, Z)$ is stationary under the measure $\bP$
defined by 
\begin{gather}
\bd\bP = \frac{1}{Z_0}\bd\ci{\bP}
\end{gather}
and $((X, Z), \xi)$  under $\ci{\bP}$ is the Palm version of 
$((X, Z), \xi)$  under $\bP$.
\end{theorem}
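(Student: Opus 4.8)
The strategy is to reverse the computation in the previous theorem, using that the strict positivity of $Z$ makes the change of measure $\bd\bP = Z_0^{-1}\bd\ci{\bP}$ well-defined and reversible (no null-set problems), and then invoke the characterisation from Theorem~\ref{T:0} that a pair is stationary exactly when it is identifiable as the stationary version sitting behind a given Palm version. Concretely, since $((X,Z),\xi)$ is mass-stationary under $\ci{\bP}$, Theorem~\ref{T:0} gives a stationary pair $((\hat X,\hat Z),\hat\xi)$ on some $(\hat\gW,\hat\cF,\hat\bP)$ whose Palm version is $((X,Z),\xi)$ under $\ci{\bP}$; note that $\hat\xi$ again has density field $\hat Z$, and $\hat Z$ is strictly positive $\hat\bP$-a.e.\ because this property is inherited by the Palm version and, conversely, transports back. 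Applying the previous theorem to $(\hat X,\hat Z)$ produces a measure $\ci{\hat\bP}$ with $\bd\ci{\hat\bP}=\hat Z_0\,\bd\hat\bP$ under which $((\hat X,\hat Z),\hat\xi)$ is the Palm version of the stationary $((\hat X,\hat Z),\hat\xi)$ under $\hat\bP$.

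\textbf{Key steps.} First I would check that $((\hat X,\hat Z),\hat\xi)$ under $\ci{\hat\bP}$ has the same distribution as $((X,Z),\xi)$ under $\ci{\bP}$: both are Palm versions of the same stationary pair (by the previous theorem and by the choice of $((\hat X,\hat Z),\hat\xi)$ via Theorem~\ref{T:0}), and the Palm version of a stationary pair is unique in distribution (it is defined by \eqref{(3.1)}). Second, I would undo the change of measure on the hatted side: since $\hat Z_0>0$ a.e., we may write $\bd\hat\bP = \hat Z_0^{-1}\,\bd\ci{\hat\bP}$, so $\hat\bP$ is recovered from $\ci{\hat\bP}$ by exactly the same recipe $\bd\,(\cdot) = Z_0^{-1}\,\bd\,(\cdot)$ that defines $\bP$ from $\ci{\bP}$. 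Because the distribution of the triple (random field, density field, random measure) under $\ci{\hat\bP}$ coincides with that under $\ci{\bP}$, and $Z_0$ is a measurable functional of that triple, applying the common change-of-measure recipe yields that the distribution of $((X,Z),\xi)$ under $\bP$ equals the distribution of $((\hat X,\hat Z),\hat\xi)$ under $\hat\bP$. The latter is stationary, hence so is the former. Finally, the Palm claim: by construction $((X,Z),\xi)$ under $\ci{\bP}$ is the Palm version of the stationary $((\hat X,\hat Z),\hat\xi)$ under $\hat\bP$, which we have just shown is distributed as $((X,Z),\xi)$ under $\bP$; so $((X,Z),\xi)$ under $\ci{\bP}$ is the Palm version of $((X,Z),\xi)$ under $\bP$.

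\textbf{Main obstacle.} The only genuinely delicate point is justifying that the strict positivity of $Z$ passes to the stationary background $\hat Z$, so that the inverse change of measure $\hat Z_0^{-1}\,\bd\ci{\hat\bP}$ makes sense and is the literal inverse of $\hat Z_0\,\bd\hat\bP$; this uses that under the Palm relation \eqref{(3.1)} the events $\{\hat Z_s=0\text{ for some }s\}$ and their images under shifts are $\hat\bP$-null iff the corresponding events are $\ci{\bP}$-null on the support of $\hat\xi$, combined with stationarity of $\hat Z$. Once that is in place, the argument is a bookkeeping exercise in "apply Theorem~\ref{T:0}, apply the previous theorem, match distributions, undo the density weighting", entirely parallel to how Theorem~\ref{T:2} reverses Theorem~\ref{T:1}. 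Alternatively, one can bypass the background pair entirely and argue directly: for bounded Borel $B$ with $0<\gl(B)<\infty$ and nonnegative measurable $f$, start from $\ci{\bE}\big[\int_B f(\gq_s((X,Z),\xi))\,\xi(\mrm d s)\big]=\gl(B)\,\ci{\bE}[f((X,Z),\xi)]$ (mass-stationarity via \eqref{(3.1)} with the yet-unknown stationary measure replaced by $\bP$), rewrite $\xi(\mrm d s)=Z_s\,\mrm d s$, pass to $\bP$ via $Z_0^{-1}$, and use Fubini to get $\int_B \bE[f(\gq_s((X,Z),\xi))]\,\mrm d s = \gl(B)\,\bE[f((X,Z),\xi)]$ for all such $B$, which forces $\bE[f(\gq_s((X,Z),\xi))]$ to be constant in $s$ and hence gives stationarity; the Palm statement then drops out of the same identity. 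The background-pair route is cleaner for the Palm conclusion, so I would present that one.
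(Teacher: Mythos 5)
Your main route is essentially the paper's: invoke Theorem~\ref{T:0} to produce a stationary $((\hat X,\hat Z),\hat\xi)$ with Palm version $((X,Z),\xi)$ under $\ci{\bP}$, apply Theorem~4 to the hatted pair, and undo the density weighting to transfer stationarity back to $\bP$. That is exactly the proof in the paper.

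Where you are looser than the paper is the step you yourself flag as delicate. You write ``note that $\hat\xi$ again has density field $\hat Z$'' as though this were immediate, and then discuss only how strict positivity transports; but the prior claim — that $\hat Z$ is a density field of $\hat\xi$ at all — also needs justification. The stationary pair furnished by Theorem~\ref{T:0} is a priori abstract, and there is no reason the second coordinate of the pair should be a Radon--Nikod\'ym density of the measure coordinate. The paper closes this gap cleanly by citing Theorem~\ref{T:2}: the stationary pair can be realized explicitly as $\theta_{S}\theta_{-T}((X,Z),\xi)$, and since shifting preserves both the density relation and strict positivity pointwise, one gets both facts in one stroke (and the hypothesis \eqref{convexhull2} needed for Theorem~\ref{T:2} holds because $Z>0$ forces $\operatorname{supp}\xi=\bR^d$). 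Your null-set argument via \eqref{(3.1)} plausibly recovers strict positivity $\hat\bP$-a.e.\ but does not by itself show the density-field property; replacing it with the Theorem~\ref{T:2} realization is both shorter and complete. The ``alternative direct route'' you sketch at the end starts from an identity with $\ci{\bE}$ on both sides which does not follow from \eqref{(3.1)} as written (that relation has $\hat{\bE}$, not $\ci{\bE}$, on the right), so you are right not to lean on it.
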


\begin{proof}
Due to Theorem~1, $((X, Z), \xi)$ 
under $\ci{\bP}$ 
is the Palm version of some stationary $((\hat{X}, \hat{Z}), \hat{\xi})$ 
defined on some $(\hat{\gW}, \hat{\cF}, \hat{\bP})$.
Due to Theorem~3,  $((\hat{X}, \hat{Z}), \hat{\xi})$ 
can in fact be obtained by shifting the paths of $((X, Z), \xi)$ itself
and thus $\hat{Z}$ will be a density field of $\hat{\xi}$.
Due to Theorem~4, $((\hat{X}, \hat{Z}), \hat{\xi})$ under the 
changed measure 
$\hat{Z}_0\bd\hat{\bP}$ is also
the Palm version
and thus has the same distribution as $((X, Z), \xi)$ under $\ci{\bP}$.
Now $\bd\hat{\bP}$ is recovered from $\hat{Z}_0\bd\hat{\bP}$ by dividing by 
$\hat{Z}_0$ just like 
$\bd\bP$ is obtained from $\bd\ci{\bP}$ by dividing by 
$Z_0$. This yields that $((X, Z), \xi)$ 
under $\bP$ has the same distribution as 
$((\hat{X}, \hat{Z}), \hat{\xi})$ under $\hat{\bP}$, as desired.
\end{proof}

\section{Characterisation in the positive density case}

\noindent 
In this first characterisation section, we let $\xi$ have a strictly positive
density field $Z$ 
and establish a natural shift characterization of mass-stationarity.

Let $\gp$ be a measurable map taking $Z$ to 
a location $\gp(Z)$ in $\bR^d$. 
Define the induced {\em allocation rule} $\gt = \gt_{\gp}^{Z}$ by
\begin{gather*}
  \gt
  (s) \defas s + \gp(\gq_s Z), \bil s \in \bR^d.
\end{gather*}
Call $\gp$ 
a {\em preserving shift} if 
for each fixed value of $\xi$ the allocation rule $\gt$
preserves~$\xi$, 
\begin{gather*}
  \xi(\gt \in \cdot)   = \xi, \quad 
  \text{that is, \quad 
   $\xi(\{s \in \bR^d :
   \gt(s) \in B\}) 
   = \xi(B)$ for Borel $B \subseteq \bR^d$}.
\end{gather*}
Say that $((X, Z),  \xi)$ is \emph{distributionally invariant under preserving shifts}
(under a measure $\ci{\bP}$) if for all preserving $\gp$
\begin{gather*}
  \gq_{\gp(Z)}((X, Z), \xi)
   \overset{D}{=} ((X, Z), \xi) \bil (\text{under } \ci{\bP}). 
\end{gather*}

In the case when $\xi$ is a simple point process
it is proved in \cite{He:La:05} 
that distributional invariance under 
preserving shifts is a characterization of Palm versions of stationary pairs,
and thus (due to 
Theorem~1) it is also a characterization of mass-stationary pairs.
Also  in \cite{LaMoeTho12} it is proved
that the same is true for diffuse random measures when $d=1$.
We shall now prove that this is still true 
in a positive density case for any $d\geq 1$.
This provides a partial solution to 
Problem 7.3 in \cite{LaTho09}.
\begin{theorem}
Let $\xi$ have a density field
$Z$  such that $Z_0 > 0$ everywhere
and such that $Z$ is locally integrable along all lines
and has infinite integral along all half-lines. 
Let $X$ and $Z$ be defined on $(\Omega, \cF , \ci{\bP})$.
Then $((X, Z), \xi)$ is mass-stationary 
if and only if $((X, Z), \xi)$ is distributionally invariant under preserving shifts.
\end{theorem}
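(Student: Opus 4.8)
The plan is to prove the two implications separately, with the "mass-stationary $\Rightarrow$ distributionally invariant under preserving shifts" direction being the easier one and the converse being the main work.

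\medskip

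\textbf{From mass-stationarity to preserving-shift invariance.}
Suppose $((X,Z),\xi)$ is mass-stationary under $\ci{\bP}$. By Theorem~1 it is the Palm version of some stationary $((\hat X,\hat Z),\hat\xi)$, and by Theorem~5 (or Theorem~3 applied here, noting $Z>0$) we may take $\hat\xi$ to have strictly positive density field $\hat Z$ with $\bd\ci{\bP}=\hat Z_0\bd\hat{\bP}$ after the identification $((\hat X,\hat Z),\hat\xi)=((X,Z),\xi)$. Fix a preserving shift $\gp$ with induced allocation $\gt$. The plan is to use the transport/refined Campbell formula: since $\gt$ preserves $\xi$ for each fixed $Z$, for any nonnegative measurable $h$ and any Borel $B$ with $0<\gl(B)<\infty$,
\begin{align*}
\int_B h(\gq_{\gt(s)}(\hat X,\hat Z))\,\hat\xi(ds)\overset{D}{=}\int_{\gt^{-1}B}h(\gq_s(\hat X,\hat Z))\,\hat\xi(ds),
\end{align*}
but what I actually need is the Palm-side identity $\ci{\bE}[h(\gq_{\gp(Z)}(X,Z))]=\ci{\bE}[h(X,Z)]$. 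This follows by writing both sides via \eqref{(3.1)} as integrals against $\hat\xi$ over a fundamental domain, using stationarity of $(\hat X,\hat Z)$ to shift, and invoking that $\gt$ preserves $\hat\xi$; the key point is that "shift the Palm point along a preserving allocation" corresponds on the stationary side to re-indexing the mass, which stationarity absorbs. This is essentially the argument of \cite{He:La:05} adapted to densities, so I would cite that structure and supply the density bookkeeping.

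\medskip

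\textbf{From preserving-shift invariance to mass-stationarity.}
This is the hard direction. Assume $(X,Z)$ is distributionally invariant under every preserving shift under $\ci{\bP}$. I must verify \eqref{(1)} for every bounded Borel $C$ with $\gl(C)>0$, $\gl(\partial C)=0$. The strategy is: given such a $C$, build a preserving shift $\gp$ (depending on $Z$ and on auxiliary randomization $U_C$) whose effect reproduces exactly the two-stage move "pick $U_C$ uniform on $C$, then pick $V_C$ from $\ci\xi(\cdot\mid C-U_C)$, then shift by $V_C$". Concretely, I would use the hypothesis that $Z$ is locally integrable along lines and has infinite integral along all half-lines to run a one-dimensional "mass clock" along, say, the first coordinate axis of the translated cell $C-U_C$: this lets me define, measurably in $Z$, a transport of the cell's $\xi$-mass onto an interval, and then a preserving rearrangement that sends the origin to a $\xi$-mass-uniform point of $C-U_C$. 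The randomization $U_C$ is handled by the device of appending an independent uniform background (the "stationary independent background" of the abstract / Section~7), so that $\gp$ becomes a genuine deterministic preserving shift of the enlarged field $(X,Z,\text{background})$, whose invariance is still guaranteed by hypothesis after checking the hypothesis lifts to the enlargement. Then distributional invariance under this $\gp$ is precisely \eqref{(1)}.

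\medskip

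The main obstacle is constructing, for an arbitrary cell $C$, a \emph{$\xi$-preserving} allocation rule that realizes the uniform-in-mass relocation inside $C-U_C$ while depending measurably on $Z$ alone (after conditioning on the background). In dimension one this is the classical "shift forward by mass $r$" construction, which is manifestly measure-preserving; in dimension $d\geq1$ one needs to reduce to a one-dimensional problem, and this is exactly where the hypotheses "locally integrable along all lines" and "infinite integral along all half-lines" are used — they guarantee the mass clock along a chosen line is a well-defined, surjective, continuous increasing function, so its inverse gives a measurable preserving rearrangement. I expect the proof to either do this reduction directly or to defer the heavy lifting to the machinery of Sections~6–7 (background randomization plus the diffuse-case characterization), applying it to the random measure $\xi(ds)=Z_s\,ds$; in the latter case the theorem is almost a corollary, and the real content is checking that the positivity and integrability assumptions on $Z$ are exactly what those sections require. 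I would structure the write-up to invoke that machinery and then close the loop with Theorem~1.
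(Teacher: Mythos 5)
Your ``if'' direction has a genuine gap, and it also misses the key simplification the paper uses.

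\textbf{The gap.} The hypothesis of Theorem~6 is distributional invariance of $(X,Z)$ under preserving shifts $\gp(Z)$ that depend only on $Z$ --- no auxiliary randomization. Your plan is to append an independent uniform background and apply invariance of the \emph{enlarged} field under preserving shifts $\gp$ that depend on the background (so as to realize the random pair $(U_C,V_C)$ from the definition of mass-stationarity). But the hypothesis gives you nothing about the enlarged field: invariance ``under preserving shifts against a stationary independent background'' is a strictly stronger property, and proving that plain preserving-shift invariance implies background-assisted invariance is essentially the content of the theorem (combined with Theorem~7). The phrase ``after checking the hypothesis lifts to the enlargement'' is precisely where the argument breaks down; there is no such lift a priori. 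Note also that Theorems~7--8 appear \emph{after} Theorem~6 in the paper, so the ``almost a corollary'' route is not available even as a citation.

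\textbf{What the paper actually does.} The paper never verifies \eqref{(1)} directly and never uses backgrounds in this proof. Instead it sets $\bd\bP = Z_0^{-1}\,\bd\ci{\bP}$ and aims to show that $((X,Z),\xi)$ is \emph{stationary} under $\bP$; mass-stationarity under $\ci{\bP}$ then follows from Theorem~4. To get stationarity in direction $t=au$ (with $|u|=1$), the only preserving shift needed is the one-dimensional mass clock along the line $\bR u$ through the origin: $\gp_r(Z)=s_r(Z)u$ with $\int_0^{s_r(Z)}Z_{xu}\,\mrm{d}x=r$. The hypotheses (local integrability along lines, infinite integral along half-lines, $Z_0>0$) are exactly what make $s_r(Z)$ well defined and $\gp_r$ preserving for every $r>0$. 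Averaging the invariance identity over $r\in[0,h]$, changing variables $s=s_r(Z)$ so that $\mrm{d}r=Z_{su}\,\mrm{d}s$, and comparing the two integrals (which differ only by a boundary strip of length $a$) gives a bound $2a/h\to 0$. So stationarity is obtained by a Ces\`aro argument with a single family of scalar mass-clock shifts, rather than by constructing, for each $C$, a $\xi$-preserving rearrangement of $C-U_C$. Your ``only-if'' sketch via the refined Campbell formula is reasonable but the paper simply cites Theorem~7.2 of \cite{LaTho09}; that part is not where the trouble lies.
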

\begin{proof} 
The only-if-direction follows from Theorem 7.2 in \cite{LaTho09}.
In order to establish the if-direction, assume
that $((X, Z),  \xi)$ is distributionally invariant under preserving shifts.
Note that if $\bP$ is the measure 
defined at (9)
then we recover our $\ci{\bP}$ as the measure defined 
at~\eqref{dd}. 
Thus, due to Theorem~4, 
$((X, Z), \xi)$ is mass-stationary (under $\ci{\bP}$) if we can show that
$((X, Z), \xi)$ is stationary under $\bP$,
that is, if we can show that  for all $t \in \bR^d$ and all nonnegative measurable $f$,
\begin{gather}  \label{B}
  \ci{\bE}[f(\gq_t ((X, Z), \xi))/Z_0] = \ci{\bE}[f(((X, Z), \xi))/Z_0].  
\end{gather}
For that purpose, fix $t \neq 0$ and write $t = au$ where $a > 0$ and $u$ is a vector of length 1. For $r > 0$, define $s_r(Z)$ 
by
\begin{gather*}
  \int_0^{s_r(Z)}Z_{xu}\mrm{d}x = r.
\end{gather*}
and define a preserving [see \cite{LaMoeTho12}, Theorem 3.1] shift $\pi_r$ by
\begin{gather*}
  \gp_r(Z) = s_r(Z)u.
\end{gather*}
Take $h>0$ and use the fact that $((X, Z),  \xi)$ 
is distributionally
invariant under $\gp_r$
to obtain the first step in
\begin{align*}
  \ci{\bE}[&f(((X, Z), \xi))/Z_0]  = \frac{1}{h}\int_0^h\ci{\bE}[f(\gq_{s_r(Z)u}((X, Z), \xi))/Z_{s_r(Z)u}]\mrm{d}r\\
  & = \frac{1}{h}\ci{\bE}\left[\int_0^h f(\gq_{s_r(Z)u}((X, Z), \xi))/Z_{s_r(Z)u}\mrm{d}r\right] \quad \text{(by Fubini).}
\end{align*}
Now apply variable substitution, $s = s_r(Z)$ and $\mrm{d}r = Z_{su}\mrm{d}s$, to obtain
\begin{align*}
  \ci{\bE}[&f(((X, Z), \xi))/Z_0]  = 
   \frac{1}{h}\ci{\bE}\left[\int_0^{s_h(Z)}f(\gq_{su}((X, Z), \xi))\mrm{d}s\right].
\end{align*}
Apply this with $f$ replaced by $f \circ \gq_t$ (and remember $t = au$) to obtain
\begin{gather*}
  \ci{\bE}[f(\gq_t ((X, Z), \xi))/Z_0] = \frac{1}{h}\ci{\bE}\left[\int_a^{a+s_h(Z)}f(\gq_{su}((X, Z), \xi))\mrm{d}s\right].
\end{gather*} 
Thus for $0 \leq f \leq 1$,
\begin{align*}
  |\ci{\bE}[&f(\gq_t ((X, Z), \xi))/Z_0]  - \ci{\bE}[f(((X, Z), \xi))/Z_0]| \\
  & \leq \frac{1}{h}\ci{\bE}\left[\int|1_{[a, a+s_h(Z)]}(s) - 1_{[0, s_h(Z)]}(s)|f(\gq_s ((X, Z), \xi))\mrm{d}s\right] \\
  & \leq 2\,\frac{a}{h} \to 0, \pbil h \to \infty.
\end{align*}
Thus \eqref{B} holds, as desired.
\end{proof}

According to Theorem 3.1 in \cite{LaMoeTho12}, the shift characterization of
mass-stationarity in Theorem~6 above works
when $d=1$ and $\ci{\xi}$ is only diffuse.
Thus, when $d=1$, the  
 background randomization
 in the next section is not needed.

\section{Characterisation in the diffuse case}
\noindent
In this second characterisation section, let $\ci{\xi}$ be diffuse, that is, 
let it have no atoms,
$$\ci{\xi}(\{t\}) = 0, \quad
t \in \bR^d.$$
We shall show that the shift characterization of
mass-stationarity in Theorem~6 works in this case 
if we apply the following background randomization 
introduced for point processes in 
\cite{Thor00}.

Let $\ci{Y}$
be a random element in a space on which the additive group $\bR^d$
acts measurably. For instance,  $\ci{Y}$ could be a 
random field
$\ci{Y} = (\ci{Y}_s)_{s \in \bR^d}$.
Call $\ci{Y}$ a {\em stationary independent background} if
$\ci{Y}$ is stationary and independent of $(\ci{X}, \ci{\xi})$ and
possibly obtained by extending the underlying space
$(\Omega, \cF, \ci{\bP})$ supporting $(\ci{X}, \ci{\xi})$.
Let $\gp$ be a measurable map taking 
$(\ci{Y}\!, \ci{\xi})$ to a location $\gp(\ci{Y}\!, \ci{\xi})$ in $\bR^d$.
Define the induced {\em allocation rule} 
$\gt = \gt_{\gp}^{(\ci{Y}\!,\, \ci{\xi})}$ by
\begin{gather*}
  \gt
  (s) \defas s + \gp(\gq_s (\ci{Y}\!, \ci{\xi})), \bil s \in \bR^d.
\end{gather*}
Call $\gp$ 
a {\em preserving shift} if 
for each fixed value of $(\ci{Y}\!, \ci{\xi})$ the allocation rule $\gt$
preserves~$\ci{\xi}$, 
\begin{gather*}
  \ci{\xi}(\gt \in \cdot)   = \ci{\xi}, \quad 
\end{gather*}
that is,
\begin{gather*}
  \text{
   $\ci{\xi}(\{s \in \bR^d :
   \gt(s) \in B\}) 
   = \ci{\xi}(B)$\quad for Borel $B \subseteq \bR^d$}.
\end{gather*}
Say that $(\ci{X}\!, \ci{\xi})$ is \emph{distributionally invariant under preserving shifts against any stationary independent background} 
if for all stationary independent backgrounds $\ci{Y}\!$
and preserving shifts $\gp$
\begin{gather*}
  \gq_{\gp(\ci{Y}\!, \ci{\xi})}(\ci{Y}\!, \ci{X}, \ci{\xi})
   \overset{D}{=} (\ci{Y}\!, \ci{X}, \ci{\xi}) \bil (\text{under } \ci{\bP}). 
\end{gather*}
Here is a key example of such $\ci{Y}$ and $\gp$.
\vspace{.2cm}

\noindent
{\bf Example 1.} Fix $n \in \bN$ and let $\gj$ be a Borel equivalence between $[0,n)^d$ and $[0,1)$. We can for instance choose $\gf$ as follows. 
For $s = (s_1,\dots,s_d) \in  [0,n)^d$, write
$s_k/n$  in binary form as $.a_{k1}0a_{k2}0\dots$ where $a_{k1},
a_{k2}, \dots$ are finite strings of the number $1$ possibly of
length zero. Put $\gf(s) = .a_{11}0\dots a_{d1}0a_{12}0\dots a_{d2}0\dots$
This mapping is measurable and has a measurable inverse. 

Let $\gm$ be a diffuse probability measure on $[0,n)^d$,
let $P_{\gm}$ be the distribution of the 
$[0,1)$ valued function $\gf$ under $\mu$,
 and let $F_{\gm}$ be its distribution function 
defined on $[0,1)$ by
\begin{gather*}
  F_{\gm}(x) = \gm(\gj \leq x) = P_{\gm}((-\infty,x]), \bil x \in [0,1).
\end{gather*}
Since $\gm$ has no atom and $\gj$ is a bijection, $F_{\gm}$ is continuous and
\begin{gather*}
  \text{under $P_{\gm}$, \bil$F_{\gm}$ is uniform on $[0,1)$.}
\end{gather*}
Thus, for each $r \in [0,1),$
\begin{gather*}
  \text{under $P_{\gm}$, \bil$(F_{\gm} + r \; \bmod 1)$ is uniform on $[0,1),$}
\end{gather*}
and, with $F_{\gm}^{-1}$ the (left- or right-continuous) generalized inverse of $F_{\gm},$
\begin{gather*}\label{9a}
  \text{under $P_{\gm}$,  \bil$F_{\gm}^{-1}(F_{\gm} + r \; \bmod 1)$ has distribution $P_{\gm}$.}
\end{gather*}
Since, by definition, $\gf$ has distribution $P_{\gm}$ under $\gm$, and 
since $\gf$ has a measurable inverse $\gj^{-1} $, this 
implies that

\vspace*{-8mm}

\begin{gather}\label{psi}
  \text{under $\gm,$ \bil $\psi_r^{\gm}$ has distribution $\gm$,}
\end{gather}

\vspace*{1mm}

\noindent
where $\psi_{r}^{\gm}$ is the function from $[0,1)^d$ to $[0,1)^d$ defined by

\vspace*{-4mm}

\begin{gather*}
  \psi_{r}^{\gm} = \gj^{-1}(F_{\gm}^{-1}(F_{\gm}(\gj) + r \; \bmod 1));
\end{gather*}

\vspace*{1mm}

\noindent
so $\psi_{r}^{\gm}$ is the combined map in 
the following diagram where $\gl_{[0,1)}$ denotes Lebesgue measure on $[0,1)$: 

\vspace*{-11mm}

\begin{gather*}
  \xymatrix{
    \overset{\gm}{[0,n)^d} \ar[r]^{\gj} &
    \overset{P_{\gm}}{[0,1)} \ar[r]^{F_{\gm}} &
    \overset{\gl_{[0,1)}}{[0,1)} \ar[d]^{+ \; r \; \bmod 1} \\
    \overset{\gm}{[0,n)^d} &
    \overset{P_{\gm}}{[0,1)} \ar[l]_{\gj^{-1}} &
    \overset{\gl_{[0,1)}}{[0,1)} \ar[l]_{F_{\gm}^{-1}}
  }  
\end{gather*}
Let $\ci{Y} = (\ci{Y}_s)_{s \in \bR^d}$ be the stationary independent background
defined as follows. Let $\ci{Y}_0$ be 
independent of $(\ci{X}, \ci{\xi})$ 
and uniform on $[0,n)^d$ and let $\ci{Y}_s$ be the vector from the 
lexicographically lowest corner of the $n\bZ^d - \ci{Y}_0$ 
box, in which $s \in \bR^d$ lies, to $s$. For 
$r \in [0,1),$ define $\gp_r$ by
\begin{gather}  \label{1}
  \gp_r(\ci{Y}, \ci{\xi}) \defas \psi_{r}^{\gm}(\ci{Y}_0) - \ci{Y}_0 
  \bil \text{where} \bil
  \gm = \gq_{-\ci{Y}_0}\ci{\xi}(\cdot \given [0,n)^d).
\end{gather}
The allocation rule $\tau_r$ induced by $\gp_r$ is
\begin{gather*}
  \gt_r
  (s) \defas 
s - \ci{Y}_s + \psi_r^{\gq_{s-\ci{Y}_s}\ci{\xi}(\cdot \given [0,n)^d)}
(s - \ci{Y}_s), \bil s \in \bR^d.
\end{gather*}
Since $s - \ci{Y}_s$  is the lowest corner of a box
and since (according to \eqref{psi}) $\psi_r^{\gm}$ preserves a diffuse measure $\gm$ on
the box $[0,n)^d$,
it follows that $\gt_r$
preserves $\ci{\xi}$ within each box of $n\bZ^d - \ci{Y}_0$.
 Thus $\gp_r$ is preserving. 
 
 Note that  in the above notation we have suppressed the parameter
 $n \in \bN$ 
introduced at the beginning of the example. 
To make the dependence on $n$ explicit
write $\pi_r^{(n)}$ for $\gp_r$ and $Y^{(n)}$ for $\ci{Y}$. 
 \qed
\vspace{0.4cm}

The next theorem gives a randomized-background characterisation
of mass-stationarity linking it to 
 the definition of point-stationarity
given in Chapter 9 of \cite{Thor00}.
It provides a partial solution to Problem~7.6 in \cite{LaTho09}.

\begin{theorem}
\label{T:11}
Let $\ci{\xi}$ be a diffuse random measure on $\bR^d$. Then
\emph{(}\ci{X}\!, \ci{\xi}\emph{)} is mass-stationary
if and only if
\emph{(}\ci{X}\!, \ci{\xi}\emph{)}  is distributionally invariant under preserving shifts 
against any independent stationary background.
Moreover, \emph{(}\ci{X}\!, \ci{\xi}\emph{)} is mass-stationary
if and only if 
\begin{gather}\label{Ex}
  \gq_{\gp_r^{(n)}(Y^{(n)}\!,\, \ci{\xi})}(Y^{(n)}\!, \ci{X}\!, \ci{\xi})
   \overset{D}{=} (Y^{(n)}\!, \ci{X}\!, \ci{\xi}) 
\end{gather}
for all the stationary independent backgrounds $Y^{(n)}$ and 
preserving shifts $\pi_r^{(n)}$,  $n \in \bN$, $r \in [0,1)$, defined in Example 1.
\end{theorem}
\begin{proof}
The only-if-results
follow from Theorem 7.2 in \cite{LaTho09}.
Since the latter if-result 
is stronger
than the first,
it only remains to show that \eqref{Ex} implies mass-stationarity.
For that purpose, assume that \eqref{Ex} holds 
and write $\gp_r$ for  $\pi_r^{(n)}$ 
and $\ci{Y}$ for $Y^{(n)}$.
Let $\ci{R}$ be
uniform on $[0, 1)$ and  independent of $(\ci{Y}, \ci{X}, \ci{\xi})$.

Now 
consider the lines 
from  \eqref{psi} to \eqref{1} and put 
$$
\ci{T} := \pi_{\ci{R}}(\ci{Y}, \ci{\xi}) \defas \psi_{\ci{R}}^{\gm}(\ci{Y}_0) - \ci{Y}_0 
  \bil \text{where} \bil
  \gm = \gq_{-\ci{Y}_0}\ci{\xi}(\cdot \given [0,n)^d).
$$
Note that 
given $(\ci{Y}\!, \ci{X}, \ci{\xi})$ the conditional distribution of
$(F_{\gm}(\gj(\ci{Y}_0)) + \ci{R}\; \bmod 1)$
is uniform on $[0, 1)$.
This implies that given $(\ci{Y}\!, \ci{X}, \ci{\xi})$ the conditional distribution of 
$F_{\gm}^{-1}(F_{\gm}(\gj(\ci{Y}_0)) + \ci{R}\; \bmod 1 )$
is~$P_\gm$.
This implies further that given $(\ci{Y}\!, \ci{X}, \ci{\xi})$ the conditional distribution of 
$\psi_{\ci{R}}^\mu(\ci{Y}\!, \ci{X}, \ci{\xi})$
is  $\gm = \gq_{-\ci{Y}_0}\ci{\xi}(\cdot \given [0,n)^d)$.
Therefore  
$$\text{given  $(\ci{Y}, \ci{X}, \ci{\xi})$
the conditional distribution of 
$\ci{T}$
is $\ci{\xi}(\cdot \given [0,n)^d -\ci{Y}_0)$.}$$
Since the conditional distribution of $\ci{Y}_0$ given
 $(\ci{X}, \ci{\xi})$
 is uniform on $[0,n)^d$ this yields 
\begin{gather}  \label{A1}
(\ci{X}, \ci{\xi}, \ci{Y}_0,\ci{T}) \overset{D}{=}(\ci{X}, \ci{\xi},U_{[0,n)^d}, V_{[0,n)^d})
\end{gather}
where $U_{[0,n)^d}$ and $V_{[0,n)^d}$ are from the definition of mass-stationarity 
at \eqref{(1)}.

Since $\ci{R}$ is independent of $(\ci{Y}, \ci{X}, \ci{\xi})$
we obtain from \eqref{Ex} that
the conditional distribution of
$
\theta_{\ci{T}}(\ci{Y}, \ci{X}, \ci{\xi})$ given $\ci{R}$
is the distribution of 
$(\ci{Y}, \ci{X}, \ci{\xi}).
$
Thus 
\begin{gather*}
  \gq_{\ci{T}}(\ci{Y}\!, \ci{X}\!, \ci{\xi})
   \overset{D}{=} (\ci{Y}\!, \ci{X}\!, \ci{\xi})  
\end{gather*}
which implies
\begin{gather}  \label{A3}
(\theta_{\ci{T}}(\ci{X}, \ci{\xi}), \ci{Y}_{\ci{T}}) \overset{D}{=} ((\ci{X}, \ci{\xi}), \ci{Y}_0).
\end{gather}
From  \eqref{A1} and $ \ci{T} + \ci{Y}_0 = \ci{Y}_{\ci{T}}$ we obtain
$$(\theta_{V_{[0,n)^d}}(\ci{X}, \ci{\xi}),V_{[0,n)^d}+U_{[0,n)^d}) 
\overset{D}{=}  (\theta_{\ci{T}}(\ci{X}, \ci{\xi}), \ci{Y}_{\ci{T}}) $$
which together with  \eqref{A3} and \eqref{A1} yields
\begin{align*} 
(\theta_{V_{[0,n)^d}}(\ci{X}, \ci{\xi}),V_{[0,n)^d}+U_{[0,n)^d}) \overset{D}{=}
((\ci{X}, \ci{\xi}), U_{[0,n)^d}).
\end{align*}
Now mass-stationarity follows from Lemma~2 below.
\end{proof}

The following lemma is 
needed in the proof of Lemma~2 below,
but it is 
quite interesting on its own.

\vspace{.2cm}
\noindent
{\bf Lemma 1.} \emph{
Suppose \eqref{(1)} holds for a bounded Borel set $C$ with $\lambda(C) > 0$ and 
$\lambda(\partial C) = 0$. Then
\vspace{-4mm}
\begin{gather}\label{(13)}
(\theta_{V_C}(\ci{X},\ci{\xi}),V_C + U_C, U_C)\overset{D}{=}((\ci{X},\ci{\xi}), U_C, V_C + U_C).
\end{gather}
}

\vspace{-8mm}
\begin{proof}
Let $f$ be the bijection defined by 

\vspace{-3mm}

$$f((\ci{X},\ci{\xi}),U_C) = 
(\theta_{-U_C}(\ci{X},\ci{\xi}), U_C)$$ 

\vspace{-3mm}

\noindent
and note that 

\vspace{-5mm}

$$f(\theta_{V_C}(\ci{X},\ci{\xi}),V_C + U_C) = 
(\theta_{-U_C}(\ci{X},\ci{\xi}), V_C + U_C).$$
Thus  \eqref{(1)} is equivalent to
\vspace{-3mm}
\begin{gather}\label{B1}
(\theta_{-U_C}(\ci{X},\ci{\xi}),V_C + U_C)\overset{D}{=}
(\theta_{-U_C}(\ci{X},\ci{\xi}), U_C).
\end{gather}

\noindent
Similarly, let $g$ be the bijection defined by 
$$g((\ci{X},\ci{\xi}),U_C, V_C + U_C) = 
(\theta_{-U_C}(\ci{X},\ci{\xi}), U_C, V_C + U_C)$$ 

\vspace{-3mm}

\noindent
and note that 

\vspace{-5mm}

$$g(\theta_{V_C}(\ci{X},\ci{\xi}),V_C + U_C, U_C) = 
(\theta_{-U_C}(\ci{X},\ci{\xi}), V_C + U_C, U_C).$$
Thus \eqref{(13)} is equivalent to
\vspace{-5mm}

\begin{gather}\label{C}
(\theta_{-U_C}(\ci{X},\ci{\xi}),V_C + U_C, U_C)\overset{D}{=}
(\theta_{-U_C}(\ci{X},\ci{\xi}), U_C, V_C + U_C).
\end{gather}
Note that the conditional distribution of $V_C+U_C$ given 
$(\theta_{-U_C}(\ci{X}\!,\ci{\xi}), U_C)$
is $(\theta_{-U_C}\ci{\xi})(\cdot\!\given\! C)$.
Thus $V_C+U_C$\, 
and $U_C$ are conditionally independent given 
$\theta_{-U_C}(\ci{X},\ci{\xi})$, see e.g.\! Proposition 66 in \cite{Kallenberg}.
Since also, due to \eqref{B1}, 
the 
conditional distribution of $V_C+U_C$ given 
$\theta_{-U_C}(\ci{X},\ci{\xi})$ is the same as that of $U_C$ given 
$\theta_{-U_C}(\ci{X},\ci{\xi})$,
we obtain
 \eqref{C}. 
\end{proof}

The next lemma reduces the class of sets $C$ 
needed to define mass-stationarity.
It was used in the proof of Theorem~\ref{T:11}
and will be used in the proof of Proposition 1 below.

\vspace{.2cm}
\noindent
{\bf Lemma 2.} \emph{
Suppose \eqref{(1)} holds for all \,$C \!= [0, n)^d$, $n \in \bN$. Then 
\emph{(}\ci{X}\!, \ci{\xi}\emph{)} is mass-stationary.}

\vspace{.2cm}
\begin{proof}
Apply Lemma 1 with $C \!= [0, n)^d$
to obtain
\begin{align*}
(\theta_{V_{[0, n)^d}}(\ci{X},\ci{\xi}),V_{[0, n)^d} + U_{[0, n)^d}, U_{[0, n)^d})
\overset{D}{=}((\ci{X},\ci{\xi}), U_{[0, n)^d}, V_{[0, n)^d} + U_{[0, n)^d}).
\end{align*}
To establish \eqref{(1)} for an arbitrary bounded 
$C$ with $\lambda(C) >0$ and 
$\lambda(\partial C) = 0$,
note that it is no restriction to assume
that there is~an~$n$ such that $C \subseteq  [0, n)^d$.
Then \eqref{(13)}  follows by conditioning on both sides in the last display by 
the event $\{U_{[0, n)^d}\! \in C, V_{[0, n)^d} + U_{[0, n)^d}\!\in{}C\}$.
This yields \eqref{(1)}.
\end{proof}

\section{Characterisation in the general $\bR^d$ case}

In this last characterisation section, we shall finally allow  $\ci{\xi}$ to be general.
The background 
randomization, applied to diffuse $\ci{\xi}$ in the previous section, 
does not work in the general
case;\!
for\! a\! counterexample\! add \!a\! stationary \!independent \!background \!in\!
Example\!~7.1\!~of\!~\cite{LaTho09}.
However, this can be mended by a 
simple extension of the background idea.

Let $\lambda_1$ be the Lebesgue measure on $\bR$.
Recall that for each $t \in \bR^d$, the map $\theta_t$ 
takes $x \in E$ 
to $\theta_t x \in E$.
Extend this class of maps
from $\bR^d$ to $\bR^{d+1}$ as follows:
for $t = (t_1, \dots, t_{d+1}) \in \bR^{d+1}$ put 
$\theta_t x = \theta_{(t_1,\dots,t_d)}x$.

\vspace{.2cm}
\noindent
{\bf Proposition 1.} \emph{
The pair $(\ci{X}\!, \,\ci{\xi})$ is mass-stationary
if and only if $(\ci{X}\!, \,\ci{\xi}\!\otimes\lambda_1)$
is mass-stationary.
}
\vspace{.2cm}
\begin{proof}
Let $U_{[0,n)^d}$ and $V_{[0,n)^d}$ be as
in the definition of mass-stationarity~at~\eqref{(1)}.
Let the conditional distribution of $U_{[0,n)^{d+1}}$ given 
$$(\ci{X}\!,\,\ci{\xi}\!\otimes\lambda_1)$$
be uniform on $[0,n)^{d+1}$, and let the conditional distribution of $V_{[0,n)^{d+1}}$ 
given $$((\ci{X}\!,\,\ci{\xi}\!\otimes\lambda_1), U_{[0,n)^{d+1}})$$ be
$(\ci{\xi}\!\otimes\lambda_1)(\cdot \given [0,n)^{d+1} - U_{[0, n)^{d+1}})$.
Then it is easily seen that
\begin{gather*}
(\theta_{V_{[0,n)^{d}}}(\ci{X}\!,\,\ci{\xi}),
V_{[0,n)^{d}} + U_{[0,n)^{d}})\overset{D}{=}((\ci{X}\!,\,\ci{\xi}),U_{[0,n)^{d}})
\end{gather*}
is equivalent to
\begin{gather*}
(\theta_{V_{[0,n)^{d+1}}}(\ci{X}\!,\,\ci{\xi}\!\otimes \lambda_1),
V_{[0,n)^{d+1}} + U_{[0,n)^{d+1}})\overset{D}{=}
((\ci{X}\!,\,\ci{\xi}\!\otimes \lambda_1),U_{[0,n)^{d+1}}).
\end{gather*}
Thus, due to Lemma 2 above, 
$(\ci{X},\ci{\xi})$ is mass-stationary
if and  only if 
$(\ci{X}\!,\,\ci{\xi}\otimes \lambda_1)$ is mass-stationary.
\end{proof}

The following simple extension of the background randomisation idea 
from the previous section
works
without restrictions on $\ci{\xi}$.

\begin{theorem}
The pair $\emph{(}\ci{X}\!,\, \ci{\xi}\emph{)}$ is mass-stationary
if and only if $\emph{(}\ci{X}\!, \,\ci{\xi}\otimes\lambda_1\emph{)}$ 
is
distribution-ally invariant under preserving shifts 
against any independent stationary background.
\end{theorem}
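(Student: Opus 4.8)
The plan is to read off this equivalence from two results already in hand: Proposition~1 and the diffuse-case characterisation of Theorem~\ref{T:11}, the latter applied in dimension $d+1$ rather than $d$. The whole point of the factor $\lambda_1$ is that it converts an arbitrary random measure into a diffuse one, so that the background-randomisation machinery of Section~7 --- which needs diffuseness --- becomes applicable with no hypothesis on $\ci{\xi}$ whatsoever.

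First I would record that $\ci{\xi}\otimes\lambda_1$ is a diffuse random measure on $\bR^{d+1}$: for every $(t,s)\in\bR^d\times\bR$ one has $(\ci{\xi}\otimes\lambda_1)(\{(t,s)\}) = \ci{\xi}(\{t\})\,\lambda_1(\{s\}) = 0$, since $\lambda_1$ has no atom. Next I would check that the standing conventions of the paper carry over to $(\ci{X},\ci{\xi}\otimes\lambda_1)$ on $\bR^{d+1}$: the group $\bR^{d+1}$ acts on $E$ through its first $d$ coordinates, as defined just before Proposition~1, so $\ci{X}$ remains shift-measurable; the origin of $\bR^{d+1}$ lies in $\operatorname{supp}(\ci{\xi}\otimes\lambda_1) = \operatorname{supp}\ci{\xi}\times\bR$ $\ci{\bP}$-a.e., because $0\in\operatorname{supp}\ci{\xi}$ $\ci{\bP}$-a.e.; and the distribution of $(\ci{X},\ci{\xi}\otimes\lambda_1)$ is $\sigma$-finite under $\ci{\bP}$, since $\ci{\xi}\mapsto\ci{\xi}\otimes\lambda_1$ is an injective bimeasurable relabelling (one recovers $\ci{\xi}(B)$ as $(\ci{\xi}\otimes\lambda_1)(B\times[0,1))$). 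Finally, because $\ci{\xi}\otimes\lambda_1$ is a deterministic function of $\ci{\xi}$, a random element is independent of $(\ci{X},\ci{\xi}\otimes\lambda_1)$ exactly when it is independent of $(\ci{X},\ci{\xi})$, so passing to $\bR^{d+1}$ neither creates nor destroys admissible independent stationary backgrounds.

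Granting these remarks, Theorem~\ref{T:11} applied to the diffuse random measure $\ci{\xi}\otimes\lambda_1$ on $\bR^{d+1}$ says that $(\ci{X},\ci{\xi}\otimes\lambda_1)$ is mass-stationary if and only if it is distributionally invariant under preserving shifts against any independent stationary background. Combining this with Proposition~1 --- which asserts that $(\ci{X},\ci{\xi})$ is mass-stationary if and only if $(\ci{X},\ci{\xi}\otimes\lambda_1)$ is mass-stationary --- yields the theorem at once. I do not expect a genuine obstacle: the argument is simply the chaining of two equivalences already proved, and the only steps that deserve a line of justification are the diffuseness of $\ci{\xi}\otimes\lambda_1$ and the (harmless) transfer of the standing hypotheses from $\bR^d$ to $\bR^{d+1}$.
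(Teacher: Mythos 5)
Your proposal matches the paper's own proof: note that $\ci{\xi}\otimes\lambda_1$ is diffuse on $\bR^{d+1}$, apply Theorem~\ref{T:11} there, and combine with Proposition~1. The extra paragraph you give verifying that the standing hypotheses transfer to $\bR^{d+1}$ is a reasonable expansion of detail that the paper leaves implicit, but the argument is the same.
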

\begin{proof}
Note that 
$\ci{\xi}\otimes\lambda_1$
is a diffuse random measure on $\bR^{d+1}$.
Thus the theorem follows from 
Theorem~7 and Proposition~1.
\end{proof}

\section{Final remark
on mass-stationarity}

In this final section we interpret Theorems 7 and 8
in terms of the transport formulae derived  in \cite{LaTho09}
and further developed in \cite{Last10a,GentnerLast11,Kallenberg11}.

Let $(X^\circ,\xi^\circ)$ be as in Section 8 and consider a stationary 
independent background $Y^\circ$ as in Section 7, defined on
$(\Omega,\mathcal{F},\bP^\circ)$. Consider an allocation rule
$\tau(Y^\circ,X^\circ,\xi^\circ,s)\equiv\tau(s):=s+\pi(\theta_s(Y^\circ,X^\circ,\xi^\circ))$,
$s\in\bR^d$, where $\pi$ is a measurable mapping with values in
$\bR^d$. Let $C$ be a measurable subset of the path space
of $Y^\circ$ such that $0<\ci{\bP}(Y^\circ\in C)<\infty$
and define  a kernel $K_{\pi,C}$ by
\begin{align}\label{kernel1}
K_{\pi,C}(X^\circ\!,\xi^\circ\!,s,B):=\frac{1}{\ci{\bP}(Y^\circ\!\!\in C)}
\int \!1\{\tau(y,X^\circ\!,\xi^\circ\!,s)\!\in\! B,\theta_{\tau(y,X^\circ\!,\xi^\circ\!,s)}y\!\in\! C\}
\ci{\bP}(Y^\circ\!\!\in\! dy)
\end{align}
where $B\subset\bR^d$ is a Borel set and $s\in\bR^d$. Using stationarity of
$Y^\circ$
it is not hard to check that $K_{\pi,C}$ is {\em invariant} in the sense that
\begin{align}\label{kernel2}
K_{\pi,C}(\theta_s(X^\circ,\xi^\circ),0,B-s)=K_{\pi,C}(X^\circ,\xi^\circ,s,B).
\end{align}
Hence $K_{\pi,C}$ is an invariant {\em weighted transport kernel} in the sense
of \cite{LaTho09}. It is also easy to see that $\pi$ is preserving
(i.e.\ $\xi^\circ(\tau\in\cdot)=\xi^\circ$) if and only if
$K_{\pi,C}$ is preserving for all $C$, that is
\begin{align}\label{kernel3}
\int K_{\pi,C}(X^\circ,\xi^\circ,s,B)\xi^\circ(ds)=\xi^\circ(B).
\end{align}
Note that $K_{\pi,C}$ depends only on the original pair $(X^\circ,\xi^\circ)$,
but not on the background $Y^\circ$. If $(X^\circ,\xi^\circ)$ is mass-stationary and
\eqref{kernel2} holds, then \cite[Theorem 4.1]{LaTho09} implies that
\begin{align}\label{kernel4}
\bE^\circ\int 1\{\theta_t(X^\circ,\xi^\circ)\in \cdot\}K_{\pi,C}(X^\circ,\xi^\circ,0,dt)
=\bP^\circ((X^\circ,\xi^\circ)\in \cdot),
\end{align}
or, equivalently,
\begin{align*}
\theta_{\pi(Y^\circ\!,\,X^\circ\!,\,\xi^\circ)}(Y^\circ,X^\circ,\xi^\circ)\overset{D}{=}(Y^\circ,X^\circ,\xi^\circ)\quad
\text{(under $\bP^\circ$)}.
\end{align*}

Theorem 7 says for a diffuse $\xi^\circ$ that mass-stationarity of
$(X^\circ,\xi^\circ)$ is equivalent to the distributional invariance
\eqref{kernel4} for all kernels $K_{\pi,C}$ of the form \eqref{kernel1}
(for preserving $\pi$ not depending on $X^\circ$). It is interesting to note that these
kernels are not Markovian, so that Problem 7.3 in \cite{LaTho09} is still open.

Now let $\hat\xi:=\xi^\circ\otimes\lambda_1$ be
the extension of $\xi^\circ$ to $\bR^{d+1}$. Let $\pi$ be a preserving shift
for $\hat{\xi}$, that is,
\begin{align*}
\iint 1\{\tau(X^\circ,\hat\xi,s,u)\in B\times C\}\xi^\circ(ds)du
=\xi^\circ(B)\lambda_1(C),
\end{align*}
where, as before, $\tau$ is the allocation rule generated by $\pi$,
$B$ is a measurable subset of $\bR^d$ and $C$ is a measurable subset of $\bR$.
If $0<\lambda_1(C)<\infty$, then
\begin{align}\label{kernel5}
K_{\pi,C}(X^\circ,\xi^\circ,s,B):=\frac{1}{\lambda_1(C)}
\int 1\{\tau(X^\circ,\hat\xi,s,u)\in B\times C\}du
\end{align}
defines an invariant kernel $K_{\pi,C}$ that  preserves
$\xi^\circ$ in the sense of \eqref{kernel3}.

Theorem 8 says for a general $\xi^\circ$ that mass-stationarity of
$(X^\circ,\xi^\circ)$ is equivalent to 
distributional invariance of $(X^\circ,\xi^\circ)$ under the composition
of the transport kernels \eqref{kernel5} and \eqref{kernel1} (in this order).
The resulting composed kernel is not Markovian.

\end{document}